\documentclass{amsart}
\usepackage[T1]{fontenc}
\usepackage{amssymb}
\usepackage{microtype}
\usepackage[hidelinks]{hyperref}

\newtheorem{theorem}{Theorem}
\newtheorem{lemma}[theorem]{Lemma}
\newtheorem{proposition}[theorem]{Proposition}
\newtheorem{conjecture}[theorem]{Conjecture}

\newcommand{\Prob}{\mathsf P}
\newcommand{\E}{\mathsf E}
\newcommand{\R}{\mathbb R}
\DeclareMathOperator{\tr}{tr}
\DeclareMathOperator{\diag}{diag}
\DeclareMathOperator{\adj}{adj}
\DeclareMathOperator{\Var}{Var}
\newcommand{\smin}{s_{\min}}
\newcommand{\norm}[1]{\lVert#1\rVert}
\newcommand{\ip}[2]{\langle#1,#2\rangle}

\title[Least singular values with deterministic shifts]{Least singular values of Gaussian matrices\\with deterministic shifts}

\author{Xuanang Hu}
\address{Shandong University, Jinan, 250100, China}
\email{xuananghu7@gmail.com}

\author{Zeyan Song}
\address{Shandong University, Jinan, 250100, China}
\email{zeyansong8@gmail.com}

\author{Xinglong Wu}
\address{Shandong University, Jinan, 250100, China}
\email{wuxinglong3@gmail.com}

\subjclass[2020]{Primary 60B20; Secondary 15B52, 15A18}
\keywords{Gaussian random matrices; least singular value; deterministic shifts; expected characteristic polynomials; majorization}
\date{}
\hypersetup{pdftitle={Least singular values of Gaussian matrices with deterministic shifts}}

\begin{document}
\begin{abstract}
Let $A$ be a deterministic real square matrix and $G$ a
standard Gaussian matrix. We determine $\E\smin(A+G)$,
up to universal constants, in terms of the singular
values of $A$. The expectation is comparable to the
square root of the smallest zero of the expected
characteristic polynomial of $(A+G)^{\top}(A+G)$.
This resolves Problem~3 in Tikhomirov's ICM 2022 survey.
We also prove that the zeros of the expected
characteristic polynomial are majorized by the
expected ordered eigenvalues of $(A+G)^{\top}(A+G)$.
\end{abstract}
\maketitle

\section{Introduction}
The least singular value of a randomly perturbed matrix is a central quantity in quantitative invertibility and smoothed analysis \cite{SpielmanTeng,SST,Tikhomirov}. It governs the conditioning of linear systems and hence their numerical stability \cite{Demmel}. Let \(G\) be an \(n\times n\) matrix with independent \(N(0,1)\) entries, and let \(A\in\R^{n\times n}\) be deterministic. Sankar, Spielman, and Teng \cite{SST} established a uniform lower-tail estimate at scale \(n^{-1/2}\): for every \(t>0\),
\[
\Prob \left\{\smin(A+G)\le \frac{t}{\sqrt n}\right\}\le Ct,
\]
where \(C>0\) is an absolute constant. However, this estimate does not capture how the singular values of \(A\) determine the expectation \(\E \smin(A+G)\). Tikhomirov \cite[Section~5, Problem~3]{Tikhomirov} asked for an estimate that reflects this dependence.

Since the squared singular values of \(A+G\) are the eigenvalues of the Gram matrix \(B=(A+G)^{\top}(A+G)\), it is natural to study the expected characteristic polynomial
\[
p_A(x)=\E \det(xI-B).
\]
Its zeros provide deterministic quantities that can be compared with the eigenvalues of \(B\). In the classical Wishart setting, the expected characteristic polynomial is a Laguerre polynomial. Dette \cite{Dette} established strong asymptotic approximations of Wishart eigenvalues by the zeros of appropriately scaled generalized Laguerre polynomials.

Hardy \cite{Hardy} showed that, for certain determinantal point processes, the zeros of the expected characteristic polynomial and the empirical spectral measure have the same limiting distribution. More generally, the theory of finite free convolutions \cite{MSSconv} provides a finite-dimensional framework for expected characteristic polynomials, including bounds on their extreme zeros.

For Gaussian matrices with deterministic shifts, the Gram matrix \(B\) belongs to the class of noncentral Wishart matrices. Classical formulas describe the coefficients of its expected characteristic polynomial \cite{ShahKhatri}; see also \cite{Forrester}. Marcus \cite{Marcus} studied the evolution of the polynomial zeros and showed that their dynamics have the same drift as the Wishart eigenvalue process. His numerical experiments for related
generalized singular-value processes also suggest majorization
phenomena; see \cite[Section~6]{Marcus}.

These connections motivate a quantitative question: how accurately do the zeros of an expected characteristic polynomial describe the spectrum of the underlying random matrix? We address this question in two ways. First, we show that the expected least singular value is comparable, up to universal constants, to the square root of the smallest zero of \(p_A\). Second, we establish a majorization inequality relating all the zeros of \(p_A\) to the expected ordered eigenvalues of the random Gram matrix. In particular, the first result answers Tikhomirov's question, while the second gives a rigorous majorization relation for Gaussian Gram matrices with arbitrary deterministic shifts motivated by Marcus's numerical observations.

Let \(0\le d_1\le\cdots\le d_n\) denote the singular values of \(A\). As shown in Section~2, the zeros of \(p_A\) are positive and simple. We write them as
\[
0<r_1(A)<\cdots<r_n(A)
\]
and set \(\rho(A)=r_1(A)\).

\begin{theorem}\label{thm:main}
Let $A\in\R^{n\times n}$ be deterministic.
Then
\begin{equation}\label{eq:root}
    c\sqrt{\rho(A)}
\le \E\smin(A+G)\le\sqrt{\rho(A)}
\end{equation}
for an absolute constant $c>0$.
Writing $0\le d_1\le\cdots\le d_n$ for the
singular values of $A$, let $\tau=\tau(A)>0$ be the unique solution of
\begin{equation}\label{eq:tau}
\frac1\tau+\sum_{i=1}^n
\frac1{d_i^2+\tau}=1.
\end{equation}
If $A$ is invertible, set
\[
\delta=\left(1-\sum_{i=1}^nd_i^{-2}\right)_+,
\qquad
v=\left(\sum_{i=1}^nd_i^{-4}\right)^{1/2}.
\]
Then
\begin{equation}\label{eq:explicit}
    \E\smin(A+G)\asymp
\tau^{-1/2}
+\delta\min\left\{d_1,\frac{\sqrt\delta}{v}\right\}.
\end{equation}
For singular $A$, the second term is taken to be zero.
\end{theorem}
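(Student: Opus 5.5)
The plan is to prove the explicit two-sided estimate \eqref{eq:explicit} for $\E\smin(A+G)$ directly, and then, separately, show that $\rho(A)$ obeys the same explicit asymptotics. The two together give \eqref{eq:root}, except for the sharp upper constant $1$, which needs its own argument.

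\emph{Step 1: reduction and the upper bound with constant $1$.} By bi-orthogonal invariance of $G$ we may take $A=D=\diag(d_1,\dots,d_n)$. For the sharp bound $\E\smin\le\sqrt{\rho}$, Jensen reduces it to $\E\lambda_{\min}(B)\le r_1$. The intended route is an interlacing/real-stability argument. The polynomial $p_A$ arises from rank-one Gaussian updates: write $B=\sum_j (a_j+g_j)(a_j+g_j)^\top$ over the columns $a_j$ of $A$. Each update acts on the characteristic polynomial by a differential operator of the form $(1-\partial_x\,\text{stuff})$, which preserves real-rootedness. The extremal-root comparison of Marcus--Spielman--Srivastava type then shows that the expected smallest eigenvalue is at most the smallest root of the expected polynomial. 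Concretely, I would prove by induction over the columns that $\E\lambda_{\min}$ of the partial Gram matrix is at most the smallest zero of the partially averaged polynomial. Each inductive step uses that the smallest eigenvalue after a rank-one Gaussian addition has expectation at most the smallest root of the averaged polynomial. This is a one-dimensional concavity statement in the secular equation. The same framework should also give the claimed majorization.

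\emph{Step 2: asymptotics of $\rho$.} The smallest root satisfies a secular-type equation. From the Shah--Khatri coefficient formula (or a direct computation via $\E\det(xI-B)$ for the diagonal shift), I expect $r_1$ to satisfy $r_1\asymp \tau^{-1}+\delta^2\min\{d_1^2,\delta/v^2\}$. I would establish this by locating the sign change of $p_A$. The term $\tau^{-1}$ is the "bulk/Wishart edge" scale: when $d_i\equiv0$, \eqref{eq:tau} gives $\tau\approx n+1$ and $r_1\asymp 1/n$. The second term arises when $\sum d_i^{-2}<1$, where the shift dominates and $r_1$ is governed by a perturbation of $d_1^2$ of size controlled by $v$.

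\emph{Step 3: the matching lower bound on $\E\smin$.} This is the main obstacle. I would use $\smin(D+G)\ge \min_i |\ip{X_i}{ D_i+G_i}|$ type negative second moment bounds: $\smin^{-2}\le\sum_i \mathrm{dist}(C_i,H_i)^{-2}$, where $C_i$ is the $i$-th column and $H_i$ the span of the others. Each distance is $|\ip{u_i}{d_ie_i+g_i}|$, with $u_i$ the unit normal, so it is noncentral Gaussian with mean $d_i\ip{u_i}{e_i}$. Two regimes appear. In the small-shift regime, a union/anti-concentration bound à la Sankar--Spielman--Teng, refined by separating coordinates with $d_i^2\gtrsim\tau$ from the rest, gives $\E\smin\gtrsim\tau^{-1/2}$. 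In the large-shift regime $\delta>0$, I would write $\smin(D+G)\ge \smin(D)\,\smin(I+D^{-1}G)$ and control $\norm{D^{-1}G}$ and its quadratic form. Here $\E\norm{D^{-1}Gx}^2=\sum d_i^{-2}<1$, and the fluctuation is of order $v$, which yields the $\delta\min\{d_1,\sqrt\delta/v\}$ term. The matching upper bounds follow from test vectors: $e_1$ gives roughly $d_1$, a vector spread according to weights $(d_i^2+\tau)^{-1}$ gives $\tau^{-1/2}$, and a Gaussian fluctuation gives the $\sqrt\delta/v$ correction. Combining with Step 2 yields \eqref{eq:root} with some $c>0$.
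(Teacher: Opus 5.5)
Your proposal has genuine gaps in both the sharp upper bound and the lower bound.

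\textbf{The upper bound (Step~1).} Your Step~1 does not establish $\E\lambda_1(B)\le r_1(A)$.

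\begin{itemize}
\item Marcus--Spielman--Srivastava interlacing only gives the \emph{existence} of a realization whose root lies on one side of the averaged root. It gives no inequality in expectation.
\item The one-step reduction fails. Write $c_i=\ip{w}{u_i}^2$. Then $\det(xI-\Lambda-ww^{\top})=\det(xI-\Lambda)\bigl(1-\sum_ic_i/(x-\mu_i)\bigr)$ is affine in $c$. So the averaged smallest root is $\lambda_{\min}(\Lambda+qq^{\top})$ evaluated at $q_i=(\E c_i)^{1/2}$, and you would need $c\mapsto\lambda_{\min}(\Lambda+qq^{\top})$, with $q_i=\sqrt{c_i}$, to be concave. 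It is not. For $\Lambda=\diag(0,1)$ and $c_1=1$ it equals $\bigl(2+c_2-\sqrt{c_2^2+4c_2}\bigr)/2$. This takes the values $1$, $(3-\sqrt5)/2$, $2-\sqrt3$ at $c_2=0,1,2$, so it is convex in $c_2$.
\item A sum of $k<n$ rank-one terms is singular. An induction on $\lambda_{\min}$ is therefore vacuous until the last step. There the earlier state is random, and you would need concavity jointly in all its eigenvalues.
\end{itemize}

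The missing idea is the paper's continuous-time argument. Along $A+W_s$, the zeros of $p_t(\cdot;a)$ move with the same drift $b_i$ as the squared singular values. By the semigroup property, $\partial_tF_k$ cancels the first-order part of the generator. What remains, $2\sum_ja_j\partial_{a_j}^2F_k$, involves only pure second derivatives. Coordinatewise concavity of $F_k(t;\cdot)$ is therefore enough; the paper gets it from $p_t=f-ug$, a rank-one representation and Ky Fan. This makes $F_k(1-s;a(A+W_s))$ a supermartingale.

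\textbf{The lower bound (Step~3).} This is where the content of the theorem lies, and it is not carried out in either regime.

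In the shifted regime, the route $\smin(D+G)\ge d_1\smin(I+D^{-1}G)$ breaks down if you then control $\norm{D^{-1}G}$, or $\min_{\norm{x}=1}\ip{x}{D^{-1}Gx}$ separately. Take $D=\sqrt{2n}\,I$. Then $\delta=1/2$, $v=1/(2\sqrt n)$ and $\E\smin\asymp\sqrt n$. Yet $\norm{D^{-1}G}\approx\sqrt2$ and $\min_x\ip{x}{D^{-1}Gx}\approx-1$, so neither $1-\norm{D^{-1}G}$ nor $1+2\min_x\ip{x}{D^{-1}Gx}$ is positive. The identity $\E\norm{D^{-1}Gx}^2=\sum_id_i^{-2}$ holds only for fixed $x$ and gives nothing uniform. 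The paper instead uses three ingredients:
\begin{itemize}
\item Gordon's comparison with $\Phi(g,h)=\min_{\norm{x}=1}(\norm{Dx+g}+\ip{h}{x})$;
\item the symmetrization $(g,h)\mapsto(h,-g)$, which makes $\Phi(g,h)+\Phi(h,-g)\ge0$;
\item Chebyshev bounds on $g^{\top}Rg$ with $R=(D^2-\lambda I)^{-1}$.
\end{itemize}

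In the regime $\delta\lesssim v$, distance and Sankar--Spielman--Teng bounds only reach the scale $n^{-1/2}$. Moreover, $\E\,\mathrm{dist}(C_i,H_i)^{-2}=\infty$. Splitting off coordinates with $d_i^2\gtrsim\tau$ does not help either. For $D=\sqrt n\,I$ one has $\tau\approx\sqrt n$ and every $d_i^2=n\gg\tau$, yet $\E\smin\asymp n^{-1/4}$: this is a collective effect. The paper obtains $\tau^{-1/2}$ from the fractional moment bound $\E\bigl[\tr\bigl((XX^{\top})^{-1}\bigr)^{1/4}\bigr]\le8\kappa^{1/4}$, where $\kappa=\sum_ir_i^{-1}$. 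This uses:
\begin{itemize}
\item the determinant-tilted law, under which the mean of this trace is exactly $\kappa$;
\item an SDE comparison with the rectangular model $[D,0]+G_{n\times(n+4)}$;
\item a random orthogonal projection back to the square model;
\item the bound $\rho\kappa\le69^2$ from Lemma~\ref{lem:scale}.
\end{itemize}

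\textbf{Steps~2 and the test vectors.} Step~2 is only a conjectured formula for $\rho$. Your test-vector upper bounds also fail as stated. One has $\norm{(D+G)e_1}\approx\sqrt{d_1^2+n}$, not $d_1$. And no fixed vector gives the scale $n^{-1/2}$ when $A=0$.
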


The upper bound in \eqref{eq:root} is a consequence of
the majorization relation between the zeros of the expected
characteristic polynomial and the expected ordered
eigenvalues of the random Gram matrix. We now state this
relation precisely.

\begin{theorem}\label{thm:majorization}
Let $B=(A+G)^{\top}(A+G)$, and let
$\lambda_1(B)\le\cdots\le\lambda_n(B)$ be its eigenvalues.
For every deterministic $A\in\R^{n\times n}$ and $1\le k\le n$,
\begin{equation}\label{eq:majorization}
 \E\sum_{i=1}^k\lambda_i(B)\le\sum_{i=1}^kr_i(A),
\end{equation}
with equality for $k=n$. Equivalently,
\[
 (r_1(A),\ldots,r_n(A))
 \prec(\E\lambda_1(B),\ldots,\E\lambda_n(B)),
\]
where $\prec$ denotes majorization. Consequently, for every convex
$\varphi:[0,\infty)\to\R$ for which the expectations exist,
\begin{equation}\label{eq:convex-order}
 \sum_{i=1}^n\varphi(r_i(A))\le\E\tr\varphi(B).
\end{equation}
Thus the uniform measure on the zeros of $p_A$ is smaller in convex
order than the expected empirical spectral measure of $B$.
\end{theorem}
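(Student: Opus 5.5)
The plan is to prove \eqref{eq:majorization} by a continuity (flow) argument in the noise level and then read off the rest. The pieces are: the trace identity for $k=n$; a time-dependent version $A+\sqrt t\,G$ in which both the eigenvalues and the zeros evolve by the same drift; and a comparison of partial sums of that drift. The comparison is the only step I expect to be hard.

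\textbf{The case $k=n$.} Equality follows because both sides are linear in $B$. The sum of the zeros of $p_A$ is minus the coefficient of $x^{n-1}$, which is $\E\tr B$. Hence
\[
\sum_i r_i(A)=\E\tr B=\tr(A^\top A)+n^2=\E\sum_i\lambda_i(B).
\]

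\textbf{The time-dependent setup.} For $k<n$ I introduce the family $B_t=(A+\sqrt t\,G)^\top(A+\sqrt t\,G)$, realized through a matrix Brownian motion $W_t$ in place of $\sqrt t\,G$. Let $\lambda(t)$ be its ordered eigenvalues and $r(t)$ the ordered zeros of $p_t(x)=\E\det(xI-B_t)$. At $t=0$ both equal the squared singular values of $A$. At $t=1$ they are the objects in the statement, because $A+W_1\overset{d}{=}A+G$.

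By Itô's formula the eigenvalues form a noncentral Wishart/Laguerre-type process. On the event of simple spectrum, which has probability one for $t>0$, it satisfies
\[
d\lambda_i=2\sqrt{\lambda_i}\,dM_i+\Bigl(n+\sum_{j\ne i}\frac{\lambda_i+\lambda_j}{\lambda_i-\lambda_j}\Bigr)dt .
\]
Following Marcus \cite{Marcus}, the simple positive zeros $r(t)$ solve the deterministic ODE with the same drift $F_i(r)$. This can be derived from the heat-type PDE $\partial_tp_t=\mathcal L p_t$ satisfied by the expected characteristic polynomial, by differentiating $p_t(r_i(t))=0$.

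\textbf{The comparison.} For partial sums $S_k(x)=\sum_{i\le k}x_i$ the interior pairs cancel, giving
\[
D_k(x):=\sum_{i\le k}F_i(x)=kn-\sum_{i\le k<j}\frac{x_i+x_j}{x_j-x_i}.
\]
The martingale part has zero mean, since it is square integrable by Gaussian moment bounds. Therefore
\[
\frac{d}{dt}\E S_k(\lambda(t))=\E D_k(\lambda(t)),\qquad \frac{d}{dt}S_k(r(t))=D_k(r(t)).
\]
The aim is to show that $\Phi_k(t)=\E S_k(\lambda(t))-S_k(r(t))$, which vanishes at $t=0$, stays $\le 0$. I would organize this as a majorization-preservation argument: show that $r(t)\prec\E\lambda(t)$ is preserved by first establishing $\E D_k(\lambda)\le D_k(\mu)$-type bounds, with $\mu$ a suitable deterministic configuration.

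This comparison is the main obstacle. The cross term $\frac{x_i+x_j}{x_j-x_i}=-1+\frac{2x_j}{x_j-x_i}$ penalizes spread across the cut at $k$. To handle it I would try two routes, in this order.
\begin{itemize}
\item First, a Schur/Jensen-type inequality. The idea is that the random configuration, being more spread, has a smaller drift for the lower partial sums. This would be combined with a barrier argument at the first time some partial-sum inequality becomes an equality.
\item Failing that, a discrete analogue. Here I would interpolate one Gaussian column (or rank-one Gaussian increment) at a time and use real-rootedness and interlacing of the expected characteristic polynomials under rank-one Gaussian updates. Then $\E\lambda$ for the updated matrix is compared with the zeros of the averaged polynomial by a Cauchy-interlacing and convexity argument, and the steps are iterated.
\end{itemize}

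\textbf{The consequences.} Once \eqref{eq:majorization} holds for all $k$, the equality at $k=n$ gives $(r_i)\prec(\E\lambda_i)$. Then \eqref{eq:convex-order} follows from the Hardy--Littlewood--P\'olya/Karamata inequality together with Jensen:
\[
\sum_i\varphi(r_i)\le\sum_i\varphi(\E\lambda_i)\le\E\sum_i\varphi(\lambda_i)=\E\tr\varphi(B).
\]
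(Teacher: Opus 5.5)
\textbf{There is a genuine gap: the comparison step for $k<n$ is not proved.} Several parts of your proposal do match the paper: the case $k=n$ (the $x^{n-1}$ coefficient gives $\sum_i r_i(A)=\tr(A^\top A)+n^2=\E\tr B$), the observation that the zeros and the squared singular values share the drift $b_i$, and the derivation of \eqref{eq:convex-order} via Karamata and Jensen. But the inequality \eqref{eq:majorization} for $k<n$ is the whole content of the theorem. You reduce it to comparing $\E D_k(\lambda(t))$ with $D_k(r(t))$ and then only list two strategies.

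\textbf{Why the forward route cannot close.} The drift functions coincide, so the discrepancy $\E D_k(\lambda(t))-D_k(r(t))$ comes entirely from the fluctuations of $\lambda(t)$. Yet $D_k$ has no usable convexity: each cross term $(x_i+x_j)/(x_j-x_i)$ has Hessian determinant $-4(x_j-x_i)^{-4}<0$, so it is a saddle, and Jensen cannot move the expectation inside. At a first touching time $t_0$ you know only $r(t_0)\prec\E\lambda(t_0)$ with equality in the $k$th partial sum. The quantity $\E D_k(\lambda(t_0))$, however, depends on the whole law of $\lambda(t_0)$, and nothing in your argument bounds it by $D_k(r(t_0))$.

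\textbf{Why the discrete route is not a proof either.} After a rank-one Gaussian update, the conditional expected characteristic polynomial involves $\tr(\adj(xI-B)\Sigma)$, which depends on the eigenvectors of $B$. Also, zeros of averaged polynomials are not functions of averaged eigenvalues. So there is no evident way to iterate the steps.

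\textbf{The missing idea.} Put the nonlinearity into a space--time test function instead of comparing drifts at a single time.
\begin{itemize}
\item Let $F_k(t;a)$ be the sum of the $k$ smallest zeros of $p_t(x;a)$. Consider $Y_s=F_k(1-s;a(A+W_s))$. It equals $\sum_{i\le k}r_i(A)$ at $s=0$ and tends to $\sum_{i\le k}\lambda_i(B)$ as $s\uparrow1$.
\item The semigroup property $r(t;r(s;a))=r(t+s;a)$ of the zero flow gives $\partial_tF_k=\sum_j b_j(a)\partial_{a_j}F_k$.
\item The generator of the squared singular values is $2\sum_j a_j\partial_{a_j}^2+\sum_j b_j\partial_{a_j}$. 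So the first-order terms cancel exactly, and $(\tfrac12\Delta_A-\partial_t)F_k=2\sum_j a_j\partial_{a_j}^2F_k$.
\item This is nonpositive because $F_k$ is concave in each $a_j$. Since $p_t$ is affine in $a_j$, one has $p_t(x;a(u))=f(x)-ug(x)$, which is the characteristic polynomial of a rank-one perturbation $\Lambda+uqq^\top$. By Ky Fan's principle, $F_k$ is then a minimum of affine functions of $u$.
\item Hence $Y_s$ is a nonnegative supermartingale, and Fatou's lemma gives \eqref{eq:majorization}.
\end{itemize}
In effect, this is a Duhamel comparison along $s\mapsto\E F_k(1-s;\lambda(s))$. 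The diffusive second-order term, which your first-order drift comparison cannot see, carries a definite sign there.
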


The majorization theorem compares partial sums of the two sequences, but does not imply a termwise comparison. This raises the question of whether the relation can be strengthened to individual eigenvalues. 

\begin{conjecture}\label{conj:individual}
There exist absolute constants $c,C>0$ such that,
for every $n\ge1$, every deterministic
$A\in\R^{n\times n}$, and every $1\le k\le n$,
\[
c\,r_k(A)\le
\E \lambda_k(B)
\le C\,r_k(A),
\qquad B=(A+G)^{\top}(A+G).
\]
\end{conjecture}

Motivated by universality phenomena in random matrix theory, we propose a more ambitious conjecture that extends this comparison beyond Gaussian perturbations.

\begin{conjecture}\label{conj:subgaussian}
Let $X$ be an $n\times n$ random matrix with iid
real entries of mean zero and variance one, satisfying
\[
\|X_{11}\|_{\psi_2}\le K.
\]
There exist constants $c_K,C_K>0$, depending only
on $K$, such that, for every deterministic
$A\in\R^{n\times n}$ and every $1\le k\le n$,
\[
c_K r_k(A)
\le \E \lambda_k(B_X)
\le C_K r_k(A),
\qquad B_X=(A+X)^{\top}(A+X).
\]
Here $r_k(A)$ denotes the $k$-th zero of the
Gaussian expected characteristic polynomial $p_A$.
\end{conjecture}

\subsection*{Outline of the proof}
To prove theorem \ref{thm:majorization}, we use
matrix Brownian motion $(W_t)_{t\ge0}$ and exploit the time evolution of the expected characteristic polynomial
\[
p_t(x;a)
=\E\det\left(xI-(A+W_t)^{\top}(A+W_t)\right),
\qquad a=a(A),
\]
where  $a(A)=(d_1^2,\dots,d_n^2)$ denotes the vector of squared singular values
of $A$. By orthogonal invariance, \(p_t(x;a)\) depends on \(A\) only through \(a(A)\). As shown in Lemma~\ref{lem:zeros}, the zeros of
$p_t(x;a)$ are positive and simple for every $t>0$.
Write them as
\[
0<r_1(t;a)<\cdots<r_n(t;a),
\]
and set
\[
F_k(t;a)=\sum_{i=1}^k r_i(t;a).
\]
 In particular,\[
F_k(1;a(A))=\sum_{i=1}^k r_i(A),
\qquad
F_k(0;a(A+W_1))
=\sum_{i=1}^k\lambda_i((A+W_1)^{\top}(A+W_1)).
\]
The key step is to show that
\[
F_k(1-s;a(A+W_s)),\qquad 0\le s<1,
\]
is a supermartingale. This yields the majorization
inequality \eqref{eq:majorization} by taking expectations and passing to
the terminal time.

The key observation is that the zeros of \(p_t(x;a)\) and the squared singular values under matrix Brownian motion have the same drift. Indeed, by \eqref{eq:ode},
\[
\partial_t r_i(t;a)=b_i(r(t;a)),
\qquad
b_i(z)=1+2z_i\sum_{j\ne i}\frac1{z_i-z_j}.
\]
The semigroup property of the zero evolution \eqref{eq:semigroup-property} then gives
\[
\partial_t F_k(t;a)
=\sum_{j=1}^n b_j(a)\partial_{a_j}F_k(t;a).
\]
On the other hand, It\^o's formula for the squared singular values of \(A+W_s\) yields
\[
\begin{aligned}
\frac12\Delta_A F_k(t;a(A))
&=\sum_{j=1}^n b_j(a)\partial_{a_j}F_k(t;a)\\
&\quad+2\sum_{j=1}^n a_j\partial_{a_j}^2F_k(t;a),
\end{aligned}
\]
where \(a=a(A)\). Thus the first-order terms cancel. To control the remaining term, we use the coefficient formula \eqref{eq:coeff}, which shows that \(p_t(x;a)\) is affine in each coordinate of \(a\). A rank-one representation and Ky Fan's variational principle then imply the coordinatewise concavity of \(F_k\) (Lemma~\ref{lem:concavity}). Consequently,
\[
\left(\frac12\Delta_A-\partial_t\right)
F_k(t;a(A))
=2\sum_{j=1}^n a_j\partial_{a_j}^2F_k(t;a)
\le0.
\]
It\^o's formula therefore establishes the supermartingale property.

The lower bound in \eqref{eq:root} requires two different approaches, depending on the relative sizes of \(\delta\) and \(v\). To explain this distinction, assume that \(A\) is invertible and, by orthogonal invariance, write \(A=D=\diag(d_1,\ldots,d_n)\). A natural approach is Gordon's  min--max comparison, which gives
\[
\E\smin(D+G)\ge\E\Phi(g,h),
\qquad
\Phi(g,h)=\min_{\norm{x}=1}
\left(\norm{Dx+g}+\ip{h}{x}\right),
\]
where \(g,h\) are independent standard Gaussian vectors. We use symmetrization to obtain a nonnegative quantity, allowing us to estimate the expectation on a favorable event alone. Indeed, since \((g,h)\stackrel{d}{=}(h,-g)\),
\[
\E\Phi(g,h)
=\frac12\E\bigl[\Phi(g,h)+\Phi(h,-g)\bigr],
\]
where
\(\Phi(g,h)+\Phi(h,-g)\ge0.\)
For \(0<\lambda<a_1\), set
\(R=(D^2-\lambda I)^{-1}.\)
We have
\[
\Phi(g,h)+\Phi(h,-g)
\ge 2\sqrt{\lambda(1-g^{\top}Rg)(1-h^{\top}Rh)}
\]
whenever \(g^{\top}Rg<1\) and \(h^{\top}Rh<1\). Thus the effectiveness of Gordon's comparison depends on whether these Gaussian quadratic forms remain below \(1\). At \(\lambda=0\),
\[
\E g^{\top}D^{-2}g=\sum_{i=1}^n d_i^{-2},
\qquad
\Var(g^{\top}D^{-2}g)=2v^2.
\]
Hence \(\delta\) measures the positive gap between the mean and the threshold \(1\), while \(v\) measures the fluctuations. This naturally leads to two regimes, according to whether \(\delta\) is large or small relative to \(v\).

When \(\delta>32v\), the gap is sufficiently large to control these fluctuations. Gordon's comparison yields
\[
\E\smin(D+G)\gtrsim
\delta\min\left\{d_1,\frac{\sqrt\delta}{v}\right\}.
\]

When \(\delta\le32v\), the gap need not dominate the fluctuations. We instead use a negative-moment argument based on a determinant change of measure. Writing \(X=D+G\), we have
\[
\smin(X)\ge\mathcal T(X)^{-1/2},
\qquad
\mathcal T(X)=\tr((XX^{\top})^{-1}).
\]
Since \(\E\mathcal T(X)\) may be infinite, we seek a bound for the fractional moment \(\E\mathcal T(X)^{1/4}\). The coefficient formula \eqref{eq:coeff} identifies the natural quantity
\[
\kappa=\sum_{i=1}^n\frac1{r_i(A)}
=\frac{\E[\det(XX^{\top})\mathcal T(X)]}
{\E\det(XX^{\top})}.
\]
Thus \(\kappa\) is exactly the expected inverse trace 
\(\E^*\mathcal T(X)\)
under the determinant-weighted Gaussian law
\[
\frac{d\Prob^*}{d\Prob}
=\frac{\det(XX^{\top})}{\E\det(XX^{\top})}.
\]

Our goal is to show that
\[
\E\mathcal T(X)^{1/4}\le 8\kappa^{1/4}.
\]
Indeed, since \(\smin(X)\ge\mathcal T(X)^{-1/2}\), Jensen's inequality then gives
\[
\E\smin(X)\ge\frac1{64\sqrt \kappa}.
\]

To establish this fractional-moment bound, we compare the determinant-weighted square model with the rectangular Gaussian matrix
\[
Y=[D,0_{n\times4}]+G_{n\times(n+4)}.
\]
By Girsanov's theorem, under \(\Prob^*\), the drift of the squared singular values is
\[
b_i(z)+4z_i\partial_i\log H(1-t;z)
\le b_i(z)+4,
\]
where \(H\) is defined in \eqref{eq:H}. The right-hand side is precisely the drift of the squared singular values of the rectangular Gaussian model. The eigenvalue comparison theorem therefore yields
\[
\E\mathcal T(Y)\le \E^*\mathcal T(X)=\kappa.
\]
We then use a random orthogonal projection to transfer this estimate back to the square model. The projection estimate \eqref{eq:projection} gives
\[
\E\mathcal T(X)^{1/4}
\le 8\bigl(\E\mathcal T(Y)\bigr)^{1/4}
\le 8\kappa^{1/4},
\]
as required. Finally, Lemma~\ref{lem:scale} gives \(\rho(A)\kappa\lesssim1\) when \(\delta\le32v\), completing the proof of the lower bound in \eqref{eq:root}.

Section~2 derives the polynomial evolution and proves
Theorem~\ref{thm:majorization}.
Section~3 establishes the negative-moment estimate. Section~4 combines the estimate with Gordon's comparison
and proves Theorem \ref{thm:main}. All vector norms are Euclidean.

\section{The polynomial evolution and majorization}

Let $(W_t)_{t\ge0}$ be an $n\times n$ matrix Brownian motion with
independent standard entries and write $p_t(x;a)$ as in the Introduction.
Let $e_j(a)$ be the $j$th elementary symmetric polynomial, with
$e_0(a)=1$. 
By \cite[Corollary, Eq.~(2.16)]{ShahKhatri},
\begin{equation}\label{eq:coeff}
 p_t(x;a)=\sum_{\ell=0}^n(-1)^\ell x^{n-\ell}
 \sum_{j=0}^\ell
 (\ell-j)!\binom{n-j}{\ell-j}^{\!2}
 t^{\ell-j}e_j(a).
\end{equation}
Differentiating \eqref{eq:coeff} shows that $p_t$
satisfies the evolution equation
\begin{equation}\label{eq:evolution}
\partial_t p_t=-(x\partial_x^2+\partial_x)p_t,
\qquad
p_0(x;a)=\prod_{i=1}^n(x-a_i).
\end{equation}
This equation and its initial condition uniquely
determine $p_t$ among polynomials in $x$ of degree at most $n$.

\begin{lemma}\label{lem:zeros}
For $t>0$, the zeros
$0<r_1(t;a)<\cdots<r_n(t;a)$ of $p_t(x;a)$ are simple.
Each $r_i(t;a)$ is nondecreasing in every coordinate of $a$.
The zeros satisfy
\begin{equation}\label{eq:ode}
 \partial_t r_i=b_i(r),\qquad
 b_i(z)=1+2z_i\sum_{j\ne i}\frac1{z_i-z_j}.
\end{equation}
\end{lemma}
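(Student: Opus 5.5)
We will use the evolution equation \eqref{eq:evolution} together with an interlacing argument. Write $L=x\partial_x^2+\partial_x=\partial_x(x\partial_x)$, so that $p_t=e^{-tL}p_0$. Since $L$ lowers degree, this is a finite sum:
\[
p_t=\sum_{m\ge0}\frac{(-t)^m}{m!}L^mp_0 .
\]
For real-rootedness we work with the Laguerre-type structure. The operator $x\partial_x^2+\partial_x$ is the generator whose eigenpolynomials are Laguerre polynomials $L_k^{(0)}$ after scaling. The cleanest route is to show that $e^{-tL}$ preserves the class of polynomials with all zeros in $[0,\infty)$, and that it maps them to polynomials with simple zeros in $(0,\infty)$ for $t>0$.

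\textbf{Step 1: preservation of real-rootedness.} The operator $e^{-tL}$ acts diagonally on the basis $x^k$ up to lower-order terms; in fact, from \eqref{eq:coeff}, $p_t$ is the Laguerre-type (multiplicative/additive) finite free convolution of $p_0$ with the Wishart polynomial $\E\det(xI-W_t^\top W_t)$, which is a scaled Laguerre polynomial $L^{(0)}_n(x/t)$ with positive simple zeros. By the Marcus--Spielman--Srivastava theory \cite{MSSconv}, the rectangular/multiplicative finite free convolution of two polynomials with nonnegative zeros again has nonnegative zeros. Alternatively, and more self-contained, we would argue by rank-one steps. By \eqref{eq:coeff}, $p_t$ is affine in each $a_j$. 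Write
\[
p_t(x;a)=q_0(x)-a_jq_1(x),
\]
and induct on $n$ (or on the number of nonzero $a_j$). Starting from $a=0$ (pure Laguerre, simple positive zeros), we show that $q_0$ and $q_1$ have interlacing positive zeros with positive leading coefficients. Then $q_0-a_jq_1$ is real-rooted with simple zeros for every $a_j\ge0$ (Hermite--Kakeya--Obreschkoff). Moreover, as $a_j$ increases each zero moves monotonically; the sign of $\partial_{a_j}p_t=-q_1$ at the zeros fixes the direction as increasing. This gives both simplicity and the coordinatewise monotonicity claimed.

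Positivity of the zeros will come from the sign pattern in \eqref{eq:coeff}: all coefficients of $(-1)^n p_t(-x)$ are positive, since the inner sums are positive for $t>0$. Hence $p_t$ has no zeros in $(-\infty,0]$, and in particular $p_t(0)\ne0$ for $t>0$.

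\textbf{Step 2: the ODE.} Once the zeros are simple, the implicit function theorem gives smooth dependence on $t$, with
\[
\partial_t r_i=-\frac{\partial_tp_t(r_i)}{\partial_xp_t(r_i)}
=\frac{r_ip_t''(r_i)+p_t'(r_i)}{p_t'(r_i)}
=1+r_i\frac{p_t''(r_i)}{p_t'(r_i)} .
\]
For a polynomial with simple zeros, the standard identity $p''(r_i)/p'(r_i)=2\sum_{j\ne i}(r_i-r_j)^{-1}$ then yields \eqref{eq:ode}.

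\textbf{Main obstacle.} The hard part is establishing the interlacing of $q_0$ and $q_1$ for arbitrary $a$ and $t$. We would try an induction showing that $q_1$ is, up to a positive constant, $p_t$ for the vector $a$ with the coordinate $a_j$ removed, in dimension $n-1$ with a shifted Laguerre parameter. The rank-one perturbation $a_j\mapsto a_j+s$ would then be realized as $\E\det$ of a rank-one update, which interlaces by the Cauchy interlacing property preserved under expectation of common-interlacing families. If this bookkeeping becomes messy, we would fall back on the finite free convolution citation for real-rootedness, and derive simplicity for $t>0$ from the ODE/heat-flow smoothing: the dynamics \eqref{eq:ode} have repulsive drift, so zeros cannot collide.
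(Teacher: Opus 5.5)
Your positivity argument (every coefficient of $(-1)^np_t(-x)$ is positive for $t>0$) is fine, and so is your derivation of \eqref{eq:ode}. Getting real-rootedness from the asymmetric additive convolution $\boxplus\boxplus_n$ of \cite{MSSconv} is also a legitimate substitute for the paper's use of the mixed characteristic polynomial theorem: \eqref{eq:coeff} does match that convolution, with the scaled Laguerre polynomial as second argument.

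The genuine gap is monotonicity of $r_i(t;a)$ in each $a_\ell$. Your only argument for it is the Hermite--Kakeya--Obreschkoff step, which presupposes that $q_0$ and $q_1$ interlace. That is exactly the obstacle you leave open, and your fallback route says nothing about monotonicity. The sketched induction is also more than bookkeeping. Differentiating \eqref{eq:coeff} gives $q_1=p_t(\cdot\,;a_{\hat j})$, the $(n-1)$-dimensional polynomial with $a_j$ deleted and the \emph{same} square shape, not a shifted parameter. There is also no pathwise interlacing to average: changing $d_j$ perturbs $(D+G)^{\top}(D+G)$ by a rank-two, generally indefinite, matrix, not a positive rank-one update.

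The missing idea is to get monotonicity from the dynamics rather than from interlacing. For distinct positive $a$, differentiate \eqref{eq:ode} in $a_\ell$. The variational system $u_i'=\sum_j\partial_{z_j}b_i(r)\,u_j$ with $u(0)=e_\ell$ is cooperative, because $\partial_{z_j}b_i(z)=2z_i/(z_i-z_j)^2\ge0$ for $j\ne i$; this is where positivity of the zeros enters. Hence $u(t)\ge0$ by the Kamke/Hirsch comparison \cite[Theorem~1.1(a)]{Hirsch}, and continuity handles general $a$. In the paper, the interlacing you wanted is a \emph{consequence} of monotonicity: it is the statement $w_i\ge0$ in Lemma~\ref{lem:concavity}.

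Simplicity is not proved either. Strict interlacing would give it, but that is the same open step. The fallback ``the drift is repulsive, so zeros cannot collide'' is circular as stated. \eqref{eq:ode} is only available where the zeros are already simple, and for repeated $a_i$ (e.g.\ $a=0$) there is no solution of \eqref{eq:ode} to start from. You must rule out a multiple zero at a given $t_0>0$ directly. The paper does this by making your repulsion heuristic quantitative. Suppose $\alpha>0$ has multiplicity $m\ge2$ at $t_0$, and set $V(t)=\frac1{2\pi i}\int_\Gamma(x-\alpha)^2\,\partial_xp_t/p_t\,dx$. This is the sum of $(r-\alpha)^2$ over the zeros inside $\Gamma$, so real-rootedness gives $V\ge0$ near $t_0$, with $V(t_0)=0$. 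On the other hand, \eqref{eq:evolution} and a residue computation give $V'(t_0)=2\alpha m(m-1)>0$, a contradiction.
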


\begin{proof}
For $A=D$, write the rows of $A+W_t$ as $v_i^{\top}$.
They are independent and satisfy
\[
\E v_iv_i^{\top}=tI+a_ie_ie_i^{\top},
\]
where $e_i$ is the $i$th standard basis column vector of
$\R^n$. The mixed characteristic polynomial theorem
\cite[Theorem~4.1 and Corollary~4.4]{MSS}
implies that $p_t(x;a)$ is real-rooted.
Although Theorem~4.1 of \cite{MSS} assumes finite support,
its identity requires only independence and finite second
moments, and hence applies to Gaussian vectors.
Since $A+W_t$ is almost surely invertible for $t>0$,
we have $$(-1)^np_t(x;a)>0$$ for $x\le0$.
Thus all zeros are positive.

Suppose a zero $\alpha>0$ has multiplicity $m\ge2$ at
$t=t_0>0$. Choose a contour $\Gamma$ enclosing $\alpha$
and no other zero of $p_{t_0}(\cdot;a)$. Define
\[
V(t)=\frac1{2\pi i}\int_\Gamma
(x-\alpha)^2\frac{\partial_xp_t(x;a)}{p_t(x;a)}\,dx.
\]
By the residue theorem and real-rootedness,
$V(t)\ge0$ for $t$ near $t_0$, with $V(t_0)=0$.
Writing $p_{t_0}(x)=(x-\alpha)^m q(x)$ with
$q(\alpha)\ne0$, differentiation under the integral
and integration by parts give
\[
\begin{aligned}
V'(t)
&=\frac1{2\pi i}\int_\Gamma
(x-\alpha)^2
\partial_x\left(\frac{\partial_t p_t}{p_t}\right)\,dx\\
&=-\frac1{2\pi i}\int_\Gamma
2(x-\alpha)\frac{\partial_t p_t}{p_t}\,dx\\
&=\frac1{2\pi i}\int_\Gamma
\frac{2(x-\alpha)
(x\partial_x^2p_t+\partial_xp_t)}{p_t}\,dx,
\end{aligned}
\]
where the last equality follows from \eqref{eq:evolution}.
Consequently,
\[
V'(t_0)
=\operatorname*{Res}_{x=\alpha}
\frac{2(x-\alpha)
(x\partial_x^2p_{t_0}+\partial_xp_{t_0})}{p_{t_0}}
=2\alpha m(m-1)>0.
\]
This contradicts the fact that $V$ has a local minimum at $t_0$.

Differentiating $p_t(r_i(t;a);a)=0$ with respect to $t$
and using \eqref{eq:evolution}, we obtain
\[
\begin{aligned}
\partial_t r_i(t;a)
&=-\frac{\partial_t p_t(r_i)}{\partial_xp_t(r_i)}\\
&=1+r_i\frac{\partial_x^2p_t(r_i)}
                  {\partial_xp_t(r_i)}\\
&=1+2r_i\sum_{j\ne i}\frac1{r_i-r_j},
\end{aligned}
\]
where $r_j=r_j(t;a)$. This proves \eqref{eq:ode}.

For $0<z_1<\cdots<z_n$, we have
\[
\partial_{z_j}b_i(z)
=\frac{2z_i}{(z_i-z_j)^2}\ge0
\qquad(i\ne j).
\]
Assume first that $0<a_1<\cdots<a_n$.
For a fixed $\ell$, differentiating \eqref{eq:ode}
with respect to $a_\ell$ gives the variational
equation for $u_i(t)=\partial_{a_\ell}r_i(t;a)$:
\[
u_i'(t)=\sum_{j=1}^n
\partial_{z_j}b_i(r(t;a))u_j(t),
\qquad
u_i(0)=\mathbf1_{\{i=\ell\}},
\]
where the initial condition follows from
$r_i(0;a)=a_i$.
Since the off-diagonal coefficients are nonnegative,
\cite[Theorem~1.1(a)]{Hirsch} gives
\[
\partial_{a_\ell}r_i(t;a)=u_i(t)\ge0
\qquad(1\le i,\ell\le n).
\]
By continuity and symmetry in $a$, the coordinatewise
monotonicity extends to all $a\in[0,\infty)^n$.
\end{proof}

\begin{lemma}\label{lem:concavity}
For $t>0$ and $1\le k\le n$, the function
\[
F_k(t;a)=\sum_{i=1}^k r_i(t;a)
\]
is concave in each coordinate of $a$.
\end{lemma}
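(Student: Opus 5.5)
The plan is to fix $t>0$, an index $j$, and all coordinates of $a$ except $a_j=s\ge0$, and to realize the zeros $r_1(t;a)<\cdots<r_n(t;a)$, viewed as functions of $s$, as the ordered eigenvalues of a rank-one perturbation $\Lambda+s\,uu^{\top}$ of a fixed diagonal matrix. Once this is done, Ky Fan's principle gives
\[
F_k=\min_{P}\tr\bigl(P(\Lambda+s\,uu^{\top})\bigr),
\]
the minimum over rank-$k$ orthogonal projections $P$. A minimum of affine functions of $s$ is concave, and this is exactly the claim.

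\emph{Step 1: affine structure.} By \eqref{eq:coeff}, $p_t(x;a)$ depends on $a$ only through the $e_\ell(a)$, and each $e_\ell(a)$ is affine in $a_j$. Hence
\[
p_t(x;a)=q(x)+s\,h(x),
\]
where $q=p_t(\cdot\,;a)|_{a_j=0}$ and $h=\partial_{a_j}p_t$, both independent of $s$. The polynomial $q$ is monic of degree $n$. The polynomial $h$ has degree $n-1$ with leading coefficient $-1$. To see this, note that $e_1(a)$ contains $a_j$ with coefficient $1$, so in \eqref{eq:coeff} the $x^{n-1}$ coefficient of $h$ is $-1$, while the $x^n$ coefficient does not involve $a$. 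By Lemma~\ref{lem:zeros} applied at $a_j=0$, which is allowed since $a\in[0,\infty)^n$, $q$ has simple positive zeros $\mu_1<\cdots<\mu_n$. Because $\deg h<\deg q$, the partial fraction expansion
\[
\frac{h(x)}{q(x)}=-\sum_{i=1}^n\frac{w_i}{x-\mu_i},\qquad w_i=-\frac{h(\mu_i)}{q'(\mu_i)},
\]
is valid.

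\emph{Step 2: sign of the weights.} This is the main point. Implicit differentiation of $q(r_i)+s\,h(r_i)=0$ at $s=0$ gives
\[
\partial_{a_j}r_i\big|_{a_j=0}=-\frac{h(\mu_i)}{q'(\mu_i)}=w_i .
\]
The zeros are simple, so this differentiation is legitimate. By the coordinatewise monotonicity in Lemma~\ref{lem:zeros}, $w_i\ge0$. Set $u_i=\sqrt{w_i}$ and $\Lambda=\diag(\mu_1,\dots,\mu_n)$. The matrix determinant lemma then gives
\[
\det\bigl(xI-\Lambda-s\,uu^{\top}\bigr)=q(x)\Bigl(1-s\sum_i\frac{w_i}{x-\mu_i}\Bigr)=q(x)+s\,h(x)=p_t(x;a).
\]
So for every $s\ge0$ the zeros $r_i(t;a)$ are exactly the ordered eigenvalues of the symmetric matrix $\Lambda+s\,uu^{\top}$.

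\emph{Step 3: conclusion.} Ky Fan gives $F_k(t;a)=\min_P\bigl(\tr(P\Lambda)+s\,u^{\top}Pu\bigr)$, which is an infimum of affine functions of $s$. Hence $F_k$ is concave in $a_j$ on $[0,\infty)$. Since $j$ was arbitrary, this proves the lemma.

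The only delicate point is Step 2, namely producing nonnegative weights so that the perturbation is positive semidefinite and of rank one. I expect the monotonicity statement in Lemma~\ref{lem:zeros} to handle it cleanly, so no separate interlacing theorem is needed. One should also check that simplicity of the zeros at $a_j=0$ holds, and Lemma~\ref{lem:zeros} covers this because it holds for all $t>0$ and all $a\in[0,\infty)^n$.
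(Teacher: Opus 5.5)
Your proof is correct and follows the paper's argument step for step. The paper likewise writes $p_t(x;a(u))=f(x)-ug(x)$ (your $h=-g$), gets nonnegative weights $w_i=g(\alpha_i)/f'(\alpha_i)$ from the monotonicity in Lemma~\ref{lem:zeros} by implicit differentiation at $u=0$, identifies the zeros with the eigenvalues of $\Lambda+uqq^{\top}$ via the matrix determinant lemma, and concludes by Ky Fan's principle as a minimum of affine functions of $u$; the only differences are cosmetic (your sign convention, and minimizing over rank-$k$ projections instead of orthonormal frames $U$).
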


\begin{proof}
Fix $t>0$ and vary only the coordinate $a_\ell=u$, writing
$a(u)$ for the resulting vector. By \eqref{eq:coeff},
\[
p_t(x;a(u))=f(x)-ug(x),
\]
where $f$ and $g$ are monic polynomials of degrees
$n$ and $n-1$, independent of $u$.

Let $\alpha_1<\cdots<\alpha_n$ be the zeros of $f$.
Differentiating $p_t(r_i(t;a(u));a(u))=0$
with respect to $u$ at $u=0$ gives
\[
w_i:=\left.\partial_u r_i(t;a(u))\right|_{u=0}
=\frac{g(\alpha_i)}{f'(\alpha_i)}\ge0,
\]
where the inequality follows from Lemma~\ref{lem:zeros}.
Since $f$ and $g$ are monic,
\[
\frac{g(x)}{f(x)}
=\sum_{i=1}^n\frac{w_i}{x-\alpha_i},
\qquad \sum_{i=1}^n w_i=1.
\]

Set $\Lambda=\diag(\alpha_1,\ldots,\alpha_n)$ and
$q=(\sqrt{w_1},\ldots,\sqrt{w_n})^{\top}$.
Then
\[
\begin{aligned}
\det(xI-\Lambda-uqq^{\top})
&=\det(xI-\Lambda)
\left(1-u\,q^{\top}(xI-\Lambda)^{-1}q\right)\\
&=f(x)\left(1-u\sum_{i=1}^n
\frac{w_i}{x-\alpha_i}\right)\\
&=f(x)-ug(x)\\
&=p_t(x;a(u)).
\end{aligned}
\]
Thus the eigenvalues of $\Lambda+uqq^{\top}$, arranged in
increasing order, are precisely
$r_1(t;a(u)),\ldots,r_n(t;a(u))$.
By Ky Fan's variational principle
\cite[Theorem~1]{Fan},
\[
\begin{aligned}
F_k(t;a(u))
&=\sum_{i=1}^k\lambda_i(\Lambda+uqq^{\top})\\
&=\min_{\substack{U\in\R^{n\times k}\\U^{\top}U=I_k}}
\left\{\tr(U^{\top}\Lambda U)+u\|U^{\top}q\|_2^2\right\}.
\end{aligned}
\]
As a minimum of affine functions of $u$,
this is concave.
\end{proof}

\begin{proof}[Proof of Theorem~\ref{thm:majorization}]
Fix $1\le k\le n$. We first show that
\[
Y_s=F_k(1-s;a(A+W_s)),\qquad 0\le s<1,
\]
is a supermartingale.
By \eqref{eq:evolution}, the polynomials
$p_t(x;r(s;a))$ and $p_{t+s}(x;a)$ satisfy
the same evolution equation with the same
initial polynomial $p_s(x;a)$. Hence
\begin{equation}\label{eq:semigroup-property}
    r(t;r(s;a))=r(t+s;a).
\end{equation}
For $0<a_1<\cdots<a_n$, differentiating
$F_k(t;r(s;a))=F_k(t+s;a)$ with respect to $s$
at $s=0$ and using \eqref{eq:ode} gives
\[
\partial_tF_k(t;a)
=\sum_{j=1}^n b_j(a)\partial_{a_j}F_k(t;a).
\]

For distinct positive squared singular values
$a=a(A)$, the generator of the squared singular
values of $A+W_s$ is
\cite[Equation~(1.5)]{KonigOConnell}
\[
\mathcal L
=2\sum_{j=1}^n a_j\partial_{a_j}^2
+\sum_{j=1}^n b_j(a)\partial_{a_j}.
\]
Consequently,
\[
\begin{aligned}
\frac12\Delta_AF_k(t;a(A))
&=\mathcal L F_k(t;a)=\sum_{j=1}^n b_j(a)\partial_{a_j}F_k(t;a)
  +2\sum_{j=1}^n a_j\partial_{a_j}^2F_k(t;a).
\end{aligned}
\]
Combining this with
Lemma~\ref{lem:concavity}, we obtain
\begin{equation}\label{eq:superharmonic}
\left(\frac12\Delta_A-\partial_t\right)
F_k(t;a(A))
=2\sum_{j=1}^n a_j\partial_{a_j}^2F_k(t;a)
\le0.
\end{equation}
For $t>0$, the coefficients of $p_t(x;a(A))$
depend smoothly on $A$, and its zeros are simple.
Thus $F_k(t;a(A))$ is smooth in $(t,A)$, and
the inequality extends to every $A$ by continuity.
By It\^o's formula,
\[
\begin{aligned}
dY_s
={}&\left\langle
\nabla_A F_k(1-s;a(A+W_s)),dW_s
\right\rangle_F\\
&+\left(\frac12\Delta_A-\partial_t\right)
F_k(1-s;a(A+W_s))\,ds.
\end{aligned}
\]
By \eqref{eq:superharmonic}, the drift is
nonpositive. Thus $Y_s$ is a nonnegative local
supermartingale, and hence a supermartingale. Therefore, for
$0<\varepsilon<1$,
\[
\E F_k(\varepsilon;a(A+W_{1-\varepsilon}))
\le F_k(1;a)
=\sum_{i=1}^k r_i(A).
\]
As $\varepsilon\downarrow0$, continuity gives
\[
F_k(\varepsilon;a(A+W_{1-\varepsilon}))
\longrightarrow
\sum_{i=1}^k\lambda_i(B)
\qquad\text{almost surely}.
\]
Fatou's lemma yields
\[
\E\sum_{i=1}^k\lambda_i(B)
\le\sum_{i=1}^k r_i(A).
\]
Moreover, the coefficient of $x^{n-1}$ in
\eqref{eq:coeff} gives
\[
\sum_{i=1}^n r_i(A)
=\sum_{i=1}^n a_i+n^2
=\E\tr B.
\]
Thus equality holds for $k=n$, proving the
majorization relation.

By the standard majorization argument in
\cite[Proposition~3.C.1(a)]{MOA} and Jensen's inequality,
\[
\sum_{i=1}^n\varphi(r_i(A))
\le\sum_{i=1}^n\varphi(\E\lambda_i(B))
\le\E\sum_{i=1}^n\varphi(\lambda_i(B))
\]
for every convex function $\varphi$.
This proves \eqref{eq:convex-order}.
\end{proof}

Taking $k=1$ in Theorem~\ref{thm:majorization} and applying
Cauchy--Schwarz gives the upper bound in Theorem~\ref{thm:main}:
\begin{equation}\label{eq:upper}
 \E\left(\smin(A+G)^2\right)\le\rho(A),
 \qquad \E\smin(A+G)\le\sqrt{\rho(A)}.
\end{equation}

\section{A negative-moment estimate}

For $t\ge0$ and $z\in[0,\infty)^n$, define
\begin{equation}\label{eq:H}
\begin{aligned}
H(t;z)
&=(-1)^n p_t(0;z)\\
&=\sum_{j=0}^n(n-j)!e_j(z)t^{n-j}\\
&=\int_0^\infty e^{-u}
  \prod_{i=1}^n(z_i+tu)\,du.
\end{aligned}
\end{equation}
By \eqref{eq:evolution},
\[
\partial_t H(t;z)
=(-1)^{n-1}\partial_xp_t(0;z).
\]
Write $H(t)=H(t;a)$ and
$\kappa=H'(1)/H(1)$, where the prime
denotes differentiation in $t$.
Since the zeros of $p_1(x;a)$ are
$r_1(A),\ldots,r_n(A)$, we have
\[
\kappa=\sum_{i=1}^n\frac1{r_i(A)}.
\]
For $X=D+G$, we also have
\[
H(1)=\E\det(XX^{\top}),\qquad
H'(1)=\E\tr(\adj(XX^{\top})).
\]
For a matrix $C$ of full row rank, define
$\mathcal T(C)=\tr\left((CC^{\top})^{-1}\right)$.

\begin{lemma}[See Corollary~5 of \cite{GM}]
\label{lem:laguerre}
Let $C$ be a deterministic $n\times(n+k)$ matrix, and let $W_t$ be a standard
matrix Brownian motion. The ordered eigenvalues
$\lambda_1(t)\le\cdots\le\lambda_n(t)$ of
\[
(C+W_t)(C+W_t)^{\top}
\]
satisfy
\[
d\lambda_i
=2\sqrt{\lambda_i}\,d\beta_i
+\left(
1+2\lambda_i\sum_{j\ne i}
\frac1{\lambda_i-\lambda_j}+k
\right)dt,
\]
where $\beta_1,\ldots,\beta_n$ are independent standard
Brownian motions. The corresponding nonnegative strong
solutions are unique and have no explosion or collision
for $t>0$.
\end{lemma}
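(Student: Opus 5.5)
Since the lemma is cited from Corollary~5 of \cite{GM}, the plan is to rederive it by the standard Itô computation for eigenvalues of a matrix-valued semimartingale. Then we invoke the well-posedness theory for the Laguerre-type SDE system.

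Set $X_t=C+W_t$ and $M_t=X_tX_t^{\top}$, an $n\times n$ symmetric matrix. By Itô's formula,
\[
dM=dW\,X^{\top}+X\,dW^{\top}+(n+k)I\,dt,
\]
because $d(WW^{\top})_{ij}$ has drift $\sum_{l=1}^{n+k}\delta_{ij}\,dt$. While the eigenvalues are distinct, they are smooth functions of $M$. Let $u_1,\ldots,u_n$ be the orthonormal eigenvectors. The perturbation formulas give
\[
d\lambda_i=u_i^{\top}dM\,u_i+\sum_{j\ne i}\frac{d\langle u_i^{\top}Mu_j\rangle}{\lambda_i-\lambda_j}.
\]

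\textbf{Martingale part.} Here $u_i^{\top}dM\,u_i=2\,u_i^{\top}dW\,X^{\top}u_i$. Its quadratic variation is $4\norm{X^{\top}u_i}^2dt=4\lambda_i\,dt$. The cross-variations between indices $i\ne j$ are proportional to $\ip{X^{\top}u_i}{X^{\top}u_j}=u_i^{\top}Mu_j=0$. By Lévy's characterization this gives the terms $2\sqrt{\lambda_i}\,d\beta_i$ with independent $\beta_i$.

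\textbf{Second-order term.} The off-diagonal entry $u_i^{\top}dM\,u_j=u_i^{\top}dW X^{\top}u_j+u_j^{\top}dWX^{\top}u_i$ has quadratic variation
\[
\bigl(\norm{X^{\top}u_j}^2+\norm{X^{\top}u_i}^2+2\,\ip{u_i}{u_j}\ip{X^{\top}u_j}{X^{\top}u_i}\bigr)dt=(\lambda_i+\lambda_j)\,dt.
\]

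\textbf{Assembling the drift.} The drift of $\lambda_i$ is
\[
(n+k)+\sum_{j\ne i}\frac{\lambda_i+\lambda_j}{\lambda_i-\lambda_j}.
\]
Writing $\frac{\lambda_i+\lambda_j}{\lambda_i-\lambda_j}=-1+\frac{2\lambda_i}{\lambda_i-\lambda_j}$, the $n-1$ copies of $-1$ cancel against $n$, leaving
\[
1+k+2\lambda_i\sum_{j\ne i}\frac1{\lambda_i-\lambda_j},
\]
which is exactly the drift in the statement.

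\textbf{Well-posedness (main obstacle).} We must show that for $t>0$ the eigenvalues never collide, never hit zero, and that the system has a unique strong solution, including from degenerate initial data $C$.

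\emph{Positivity.} For $t>0$ the matrix $X_t$ has absolutely continuous law, so $M_t$ is almost surely invertible at each fixed time. To get positivity for all times simultaneously, compare the smallest eigenvalue with a squared Bessel process of dimension $1+k+\cdots\ge2$; the repulsion term only helps.

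\emph{Non-collision.} Use a McKean-type argument: apply Itô to $\log\prod_{i<j}(\lambda_j-\lambda_i)$ and show its drift is bounded below. The singular terms combine via the identity $\sum_{\text{distinct } i,j,l}\frac{\cdot}{(\lambda_i-\lambda_j)(\lambda_i-\lambda_l)}=0$. Hence the logarithm cannot reach $-\infty$ in finite time.

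\emph{Entrance from degenerate data.} If the initial $C$ has repeated or zero singular values, start from time $\varepsilon>0$, where the above applies almost surely, and let $\varepsilon\downarrow0$.

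\emph{Strong uniqueness.} The coefficients are locally Lipschitz on the open Weyl chamber $\{0<\lambda_1<\cdots<\lambda_n\}$, which together with non-explosion gives pathwise uniqueness. At the boundary $\lambda_1=0$, the coefficient $\sqrt{\lambda}$ is only $\tfrac12$-Hölder. There we would use a Yamada--Watanabe argument, which is presumably the content of \cite{GM}. Accordingly, I would defer to that reference for the boundary-uniqueness statement rather than reprove it.
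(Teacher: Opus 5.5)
The paper gives no argument here: it imports both the equation and the well-posedness from Corollary~5 of \cite{GM}. Your route differs in that you actually derive the equation, and that part is correct. It is Bru's computation for a Wishart process of dimension $n+k$: the martingale part has bracket $4\lambda_i\,dt$ with vanishing cross-variations, the off-diagonal brackets are $(\lambda_i+\lambda_j)\,dt$, and the drift $(n+k)+\sum_{j\ne i}\frac{\lambda_i+\lambda_j}{\lambda_i-\lambda_j}$ rearranges to the stated one. The McKean argument for non-collision also works, and the drift of the logarithm is in fact identically zero, not merely bounded below. The two-body terms in the drift of $\log\prod_{i<j}(\lambda_j-\lambda_i)$ cancel exactly against the It\^o correction $-2(\lambda_i+\lambda_j)/(\lambda_j-\lambda_i)^2$, and the three-body terms vanish by the divided-difference identity. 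Non-explosion, which you invoke without proof, follows because $\sum_i\lambda_i$ solves the squared Bessel equation of dimension $n(n+k)$. What your route buys is a self-contained derivation of the equation and of non-collision. Like the paper, it still rests on \cite{GM} for pathwise uniqueness.

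Two corrections are needed. First, the positivity step is wrong as argued. For $\lambda_1$ the interaction term $2\lambda_1\sum_{j\ne1}(\lambda_1-\lambda_j)^{-1}$ is \emph{negative}, so repulsion pushes $\lambda_1$ toward $0$ and cannot be dropped in a comparison from below. As $\lambda_1\to0$ that term vanishes, so $\lambda_1$ behaves locally like a squared Bessel process of dimension $1+k$, which avoids $0$ only when $k\ge1$. For $k=0$ positivity is simply false: already for $n=1$, $(c+W_t)^2$ is a squared Bessel process of dimension $1$. The statement only asserts nonnegativity, so nothing in the lemma is lost. However, your local-Lipschitz uniqueness argument on $\{0<\lambda_1<\cdots<\lambda_n\}$ needs positivity. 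For $k\ge1$ you can obtain it correctly by McKean's argument applied to $\log\prod_i\lambda_i$, whose drift is $(k-1)\sum_i\lambda_i^{-1}\ge0$ because the interaction terms cancel by antisymmetry.

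Second, restarting at time $\varepsilon$ shows that the eigenvalue process has no collisions for $t>0$. It does not give pathwise uniqueness for solutions started from a colliding or zero configuration, where the drift is singular at $t=0$, since two solutions could differ on $[0,\varepsilon]$. That is precisely the main theorem of \cite{GM}, so this case, and not only the boundary $\lambda_1=0$, must be deferred to the reference. In the paper's application ($k=4$, $C=[D,0_{n\times4}]$ with distinct positive $a_i$), your corrected argument suffices on its own.
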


\begin{proposition}
\label{prop:negative}
There is an absolute constant $c>0$ such that
\[
\E \smin(D+G)\ge\frac{c}{\sqrt \kappa}.
\]
One may take $c=1/64$.
\end{proposition}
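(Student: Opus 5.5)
The plan follows the outline in the Introduction. Write $X=D+G$ and $\mathcal T(X)=\tr((XX^{\top})^{-1})$. Since $\smin(X)^{-2}=\lambda_{\max}((XX^{\top})^{-1})\le\mathcal T(X)$, we have $\smin(X)\ge\mathcal T(X)^{-1/2}$. Jensen's inequality for the convex map $y\mapsto y^{-2}$ then gives
\[
\E\smin(X)\ge\E\mathcal T(X)^{-1/2}\ge\bigl(\E\mathcal T(X)^{1/4}\bigr)^{-2}.
\]
It therefore suffices to prove $\E\mathcal T(X)^{1/4}\le 8\kappa^{1/4}$, which yields $c=1/64$.

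\emph{Step 1: determinant change of measure.} Define $\Prob^*$ by $d\Prob^*/d\Prob=\det(XX^{\top})/H(1)$. Since $\det(XX^{\top})\mathcal T(X)=\tr\adj(XX^{\top})$, the identities $H(1)=\E\det(XX^\top)$ and $H'(1)=\E\tr\adj(XX^\top)$ give $\E^*\mathcal T(X)=\kappa$.

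To describe $\Prob^*$ dynamically, run the squared singular values $z(s)=a(D+W_s)$ for $s\in[0,1]$. The martingale $M_s=H(1-s;z(s))/H(1)=\E[\det(X X^{\top})\mid\mathcal F_s]/H(1)$ is the density process. By Girsanov's theorem, with diffusion coefficient $2\sqrt{z_i}$, the drift of $z_i$ under $\Prob^*$ becomes $b_i(z)+4z_i\partial_{z_i}\log H(1-s;z)$. From the integral form \eqref{eq:H},
\[
\partial_{z_i}\log H=\frac{\int e^{-u}\prod_{j\ne i}(z_j+tu)\,du}{\int e^{-u}\prod_j(z_j+tu)\,du}\le\frac1{z_i},
\]
so the added drift is at most $4$. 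By Lemma~\ref{lem:laguerre}, $b_i(z)+4$ is exactly the drift of the eigenvalues of $YY^{\top}$ with $Y=[D,0_{n\times4}]+G_{n\times(n+4)}$.

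\emph{Step 2: comparison.} The two processes share the same noise structure and start at the same point, and the $\Prob^*$ drift is dominated by the $Y$-drift. A comparison theorem for these ordered multidimensional SDEs (a monotone coupling of the noncolliding processes, in the style of the Hirsch-type cooperative argument of Lemma~\ref{lem:zeros}, using $\partial_{z_j}b_i\ge0$) then gives $\lambda_i(YY^\top)\ge\lambda_i(XX^\top)$ coordinatewise in law. Since $\mathcal T$ is decreasing in the eigenvalues, $\E\mathcal T(Y)\le\E^*\mathcal T(X)=\kappa$.

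\emph{Step 3: projection.} Let $P$ be a uniformly random $(n+4)\times n$ matrix with orthonormal columns, independent of everything else. Then $X':=YP$ has the law of $D'+G$ for a matrix $D'$ whose singular values are dominated by those of $D$. This needs care. Instead, I would use the orthogonal invariance of $G$ together with the Schur-complement relation between $(YY^\top)^{-1}$ and the square submatrix $X$: we have $Y=[X,\,G']$ with $G'$ an independent $n\times4$ Gaussian block, and
\[
YY^\top=XX^\top+G'G'^\top.
\]
Write $XX^\top=YY^\top-G'G'^\top$, a rank-$4$ downdate. The target is a bound of the form $\E\mathcal T(X)^{1/4}\le 8(\E\mathcal T(Y))^{1/4}$. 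To get it, condition on $Y$ and note that $X=YQ$, where $Q$ restricts to the first $n$ coordinates, so that
\[
\mathcal T(X)\le\|(Q^{\top}Y^{\top}(YY^{\top})^{-1}YQ)^{-1}\|\,\mathcal T(Y).
\]
The matrix $Y^\top(YY^\top)^{-1}Y$ is a projection onto a uniformly random $n$-dimensional subspace of $\R^{n+4}$, because $G$ is invariant under right rotations. Hence $\|(\cdot)^{-1}\|=\smin(\text{$n\times n$ block of a Haar }(n+4)\times n\text{ frame})^{-2}$. Its $1/4$-moment is the crux: I expect $\E\,\smin(U_{n\times n})^{-1/2}\le C$ for a square block of a Haar $(n+4)\times n$ frame. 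This follows from the explicit matrix-beta density of $U^\top U$, whose smallest eigenvalue $\mu$ has density near $0$ of order $\mu^{(4-1)/2}\cdot n^{O(1)}$ scaled appropriately. I would check carefully that the resulting constant is dimension-free, using the $4$ extra columns exactly so that the $\mu^{-1/2}$ moment is integrable. Hölder's inequality (Cauchy--Schwarz on $\mathcal T(Y)^{1/4}$ times the projection factor) then gives the constant $8$.

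The main obstacle is Step 3, namely obtaining a dimension-free bound on the small-ball behavior of the projection factor. Step 2's comparison theorem is a secondary delicacy near $s=0$, where eigenvalues may collide; I would handle it by starting at $s=\varepsilon$ and letting $\varepsilon\downarrow0$.
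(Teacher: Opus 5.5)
Your reduction to $\E\mathcal T(X)^{1/4}\le 8\kappa^{1/4}$, the determinant tilt with drift $b_i+4z_i\partial_i\log H\le b_i+4$, and the comparison giving $\E\mathcal T(Y)\le\kappa$ coincide with the paper's argument. Step~3, however, does not work as proposed.

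First, $\Pi=Y^\top(YY^\top)^{-1}Y$ is not a Haar projection. Right rotations preserve $\widetilde G$ but not the shift $[D,0]$. Also, since $\Pi$ is a function of $Y$, the factor $N:=\norm{(Q^\top\Pi Q)^{-1}}$ is correlated with $\mathcal T(Y)$. Hölder then forces a higher moment of $N$ than the $1/4$-moment you plan to bound (e.g.\ $\E N^{1/3}$ if you put exponent $4$ on $\mathcal T(Y)^{1/4}$).

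More seriously, the operator-norm inequality $\mathcal T(X)\le N\,\mathcal T(Y)$ loses a factor of order $n$, even when $D=0$ and $\Pi$ really is uniform. Take $Z$ an $(n+4)\times 4$ Gaussian matrix with top $n\times4$ block $Z_1$ and bottom $4\times4$ block $Z_2$. Then $(Q^\top\Pi Q)^{-1}\overset{d}{=}I_n+Z_1(Z_2^\top Z_2)^{-1}Z_1^\top$. Since $Z_1^\top Z_1\approx nI_4$, its norm is $\asymp n/\smin(Z_2)^2$. Equivalently, the $n\times n$ block of a Haar $(n+4)\times n$ frame has smallest singular value of order $n^{-1/2}$, not order $1$. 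So the dimension-free bound $\E\smin(U_{n\times n})^{-1/2}\le C$ you expect is false; the quantity grows like $n^{1/4}$. For $D=0$ your chain yields at best $\E\smin(G)\gtrsim 1/n$ instead of $n^{-1/2}$.

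The missing idea is to use a projection \emph{independent} of $Y$ and to keep the trace rather than the operator norm. This is precisely the route you set aside as needing care.

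\begin{itemize}
\item \emph{Projection estimate.} Take a fixed full-row-rank $C=\Delta[I_n,0]$ and an independent Haar frame $P$, represented through $Z$ as above. Then $\mathcal T(CP)=\tr\bigl(\Delta^{-2}(I_n+Z_1MZ_1^\top)\bigr)$ with $M=(Z_2^\top Z_2)^{-1}$, and $\E_{Z_1}Z_1MZ_1^\top=(\tr M)I_n$. So the rank-$4$ perturbation of norm $\asymp n\norm{M}$ points in random directions and contributes only $\tr(M)\,\mathcal T(C)$ on average.
\item \emph{Moment bound.} Condition on $Z_2$, apply Jensen, and use $M_{jj}\overset{d}{=}|g|^{-2}$ to get $\E_P\mathcal T(CP)^{1/4}\le(1+4\E|g|^{-1/2})\,\mathcal T(C)^{1/4}<8\,\mathcal T(C)^{1/4}$. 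Apply this with $C=Y$.
\item \emph{Return to $D+G$.} Given $P$, the matrix $\widetilde GP$ is a standard $n\times n$ Gaussian matrix, and $[D,0]PP^\top[D,0]^\top\preceq D^2$. So $YP$ is a Gaussian matrix shifted by a matrix whose singular values are dominated by those of $D$. The same SDE comparison (identical drift, ordered initial data) makes the eigenvalues of $(D+G)(D+G)^\top$ stochastically larger than those of $(YP)(YP)^\top$.
\end{itemize}

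Together these give $\E\mathcal T(D+G)^{1/4}\le\E\mathcal T(YP)^{1/4}\le 8\,\E\mathcal T(Y)^{1/4}\le 8\kappa^{1/4}$.
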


\begin{proof}
Write
\[
\mathcal T(X)=\tr\big((XX^{\top})^{-1}\big).
\]
Since
\[
\smin(X)\ge\mathcal T(X)^{-1/2},
\]
Jensen's inequality gives
\[
\E \smin(D+G)
\ge
\left(\E\mathcal T(D+G)^{1/4}\right)^{-2}.
\]
It suffices to prove
\begin{equation}\label{eq:negative-moment}
\E\mathcal T(D+G)^{1/4}\le8\kappa^{1/4}.
\end{equation}

Let $(\widetilde W_t)_{t\ge0}$ be an $n\times(n+4)$ matrix Brownian motion, and set
\[
Y_t=[D,0_{n\times4}]+\widetilde W_t,\qquad Y=Y_1.
\]
We first show that
\begin{equation}\label{eq:rect}
\E\mathcal T(Y)\le \kappa.
\end{equation}

Assume initially that $a_1,\ldots,a_n$ are positive
and distinct. For $X_t=D+W_t$, change measure by
\[
\frac{d\Prob^*}{d\Prob}
=\frac{\det(X_1X_1^{\top})}{H(1)}.
\]
Let $\lambda_1(t)\le\cdots\le\lambda_n(t)$ be the eigenvalues of $X_tX_t^{\top}$.
The conditional density of $\Prob^*$ at time $t<1$ is
\[
M_t=\frac{H(1-t;\lambda(t))}{H(1)}.
\]
Under $\Prob^*$, write $\xi_1(t)\le\cdots\le\xi_n(t)$ for the eigenvalues of $X_tX_t^{\top}$. It\^o's formula and Girsanov's theorem show that the process $\xi(t)$ has drift
\[
b_i(z)+4z_i\partial_i\log H(1-t;z),
\qquad
b_i(z)=1+2z_i\sum_{j\ne i}\frac1{z_i-z_j}.
\]
Since $H(s;z)$ has nonnegative coefficients and is
affine in each $z_i$,
\[
0\le z_i\partial_i\log H(s;z)\le1.
\]
Hence the tilted drift is bounded above by $b_i(z)+4$.

Let $\zeta_1(t)\le\cdots\le\zeta_n(t)$ be the eigenvalues of $Y_tY_t^{\top}$.
By Lemma~\ref{lem:laguerre}, they satisfy the same stochastic
differential equation with drift $b_i+4$.
By the multidimensional comparison theorem for SDEs
with diagonal diffusion and quasi-monotone drift
\cite[Theorem~1.2]{GeissManthey}, the two eigenvalue
processes can be coupled so that
\[
\xi_i(t)\le\zeta_i(t),\qquad i=1,\ldots,n.
\]
Indeed, the comparison condition follows from
\[
0\le 4z_i\partial_i\log H(1-t;z)\le4
\]
and
\[
\partial_{z_j}b_i(z)
=\frac{2z_i}{(z_i-z_j)^2}\ge0,\qquad j\ne i.
\]
We first apply the theorem away from the collision
hyperplanes and then remove the localization.

Consequently, for every $\varepsilon>0$,
\[
\E\sum_{i=1}^n\frac1{\zeta_i(1)+\varepsilon}
\le
\E^*\sum_{i=1}^n\frac1{\xi_i(1)+\varepsilon}.
\]
Letting $\varepsilon\downarrow0$ gives
\[
\begin{aligned}
\E\mathcal T(Y)
&\le \E^*\mathcal T(X_1)=\frac{\E\tr\big(\adj(X_1X_1^{\top})\big)}{H(1)}
=\kappa.
\end{aligned}
\]
Approximation and Fatou's lemma extend
\eqref{eq:rect} to every $D$.

We next establish the projection estimate
\begin{equation}\label{eq:projection}
\E_P\mathcal T(CP)^{1/4}
\le8\mathcal T(C)^{1/4},
\end{equation}
where $C$ is any fixed full-row-rank
$n\times(n+4)$ matrix and $P$ is an independent uniform
$(n+4)\times n$ orthonormal frame.

By orthogonal invariance, take $C=\Delta[I_n,0]$,
with $\Delta$ positive diagonal. Write
\[
Z=\begin{pmatrix}Z_1\\Z_2\end{pmatrix},
\]
where $Z_1$ and $Z_2$ are independent standard Gaussian
matrices of sizes $n\times4$ and $4\times4$. We may
represent
\[
PP^{\top}=I-Z(Z^{\top}Z)^{-1}Z^{\top}.
\]
Since $Z_2$ is invertible almost surely,
\[
\big([I_n,0]PP^{\top}[I_n,0]^{\top}\big)^{-1}
=I_n+Z_1(Z_2^{\top}Z_2)^{-1}Z_1^{\top}.
\]
Set $M=(Z_2^{\top}Z_2)^{-1}$. Conditioning on $Z_2$ gives
\[
\E_{Z_1}\mathcal T(CP)
=\mathcal T(C)(1+\tr M).
\]
By Jensen's inequality,
\[
\E_{Z_1}\mathcal T(CP)^{1/4}
\le\mathcal T(C)^{1/4}(1+\tr M)^{1/4}.
\]

The formula for the diagonal entries of an inverse
Gram matrix gives $M_{jj}\overset{d}=|g|^{-2}$,
where $g\sim N(0,1)$.
Using the subadditivity of $u^{1/4}$,
\[\E(1+\tr M)^{1/4}
\le1+4\E|g|^{-1/2}
<8.\]
This proves \eqref{eq:projection}.

Conditional on $P$, the noise $GP$ is a standard
square Gaussian matrix. Moreover,
\[
[D,0]PP^{\top}[D,0]^{\top}\preceq D^2.
\]
The eigenvalue comparison above, followed by
\eqref{eq:projection}, Jensen's inequality, and
\eqref{eq:rect}, gives
\[
\begin{aligned}
\E\mathcal T(D+G)^{1/4}
\le\E\mathcal T(YP)^{1/4}
\le8\E\mathcal T(Y)^{1/4}
\le8\big(\E\mathcal T(Y)\big)^{1/4}
\le8\kappa^{1/4}.
\end{aligned}
\]
Combining this with \eqref{eq:negative-moment} proves
\[
\E \smin(D+G)\ge\frac1{64\sqrt \kappa}.
\]
\end{proof}

\section{The lower bound}

By orthogonal invariance, we work with $D=\diag(d_1,\ldots,d_n)$.
Write $a_i=d_i^2$, $\rho=\rho(D)$, and $r_i=r_i(1;a)$.
When $D$ is invertible, put $s=\sum_{i=1}^n a_i^{-1}$,
so that $\delta=(1-s)_+$.

\begin{lemma}\label{lem:scale}
We have
\begin{equation}\label{eq:kappa-tau}
 \frac{\tau}{8}\le 1+\kappa\le \tau.
\end{equation}
If $D$ is invertible and $\delta\le32v$, then
\begin{equation}\label{eq:small-gap}
 \kappa\asymp \tau,\qquad \rho \kappa\le69^2.
\end{equation}
If $D$ is invertible and $\delta>32v$, then
\begin{equation}\label{eq:detgap}
 \rho\le3\min\left\{a_1\delta^2,\frac{\delta^3}{v^2}\right\}.
\end{equation}
\end{lemma}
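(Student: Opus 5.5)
The plan is to turn every quantity in Lemma~\ref{lem:scale} into a moment of one explicit log-concave measure on $[0,\infty)$, and then read off each inequality. By \eqref{eq:H}, $H(t;a)=\int_0^\infty e^{-u}\prod_i(a_i+tu)\,du$. Let $\mu$ be the probability measure on $[0,\infty)$ with density proportional to $e^{-u}f(u)$, where $f(u)=\prod_i(a_i+u)$. Differentiating in $t$ at $t=1$ gives
\[
\kappa=H'(1)/H(1)=\E_\mu\Bigl[\textstyle\sum_i u/(a_i+u)\Bigr].
\]
Integrating $\int_0^\infty u f'(u)e^{-u}\,du$ by parts, with vanishing boundary terms, gives $\kappa=\E_\mu u-1$, so that $1+\kappa=\E_\mu u$.

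For the upper bound in \eqref{eq:kappa-tau}, set $g(u)=u-1-\sum_i u/(a_i+u)$. The identity above says $\E_\mu g=0$. Since each $u/(a_i+u)$ is concave, $g$ is convex. Moreover $g(0)=-1$, and $g(u)=u(1-\psi(u))$ with $\psi(u)=1/u+\sum_i 1/(a_i+u)$ decreasing, so $\tau$ is the unique positive zero of $g$ and $g\le0$ exactly on $[0,\tau]$. Jensen's inequality gives $g(\E_\mu u)\le\E_\mu g=0$, hence $1+\kappa=\E_\mu u\le\tau$.

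For the lower bound $\E_\mu u\ge\tau/8$, I would use log-concavity. The density of $u\,d\mu$ is $u e^{-u}f(u)$, which is log-concave with mode exactly at $\tau$, since $(\log)'=\psi-1$. I would compare masses of $\mu$ on $[0,\tau/4]$ and on $[\tau/2,\tau]$. On $[0,\tau]$ one has $\psi\ge1$, so $u e^{-u}f(u)$ is increasing there. Writing $e^{-u}f(u)=(u e^{-u}f(u))/u$ then gives $\mu([\tau/2,\tau])\gtrsim\mu([0,\tau/4])$, and hence $\E_\mu u\ge(\tau/2)\,\mu([\tau/2,\infty))\ge\tau/8$ after tuning the constants. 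This step is routine but fiddly.

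Next comes \eqref{eq:small-gap}. Since $\psi(1)>1$ we always have $\tau>1$. If $\delta\le32v$, I claim $\tau$ is bounded below by a constant larger than $1$, which together with \eqref{eq:kappa-tau} gives $\kappa\asymp\tau$. Indeed, if $\tau$ were close to $1$, then $\sum_i 1/(a_i+1)$ would be small, forcing every $a_i\gtrsim1$ and $s$ small. But then $v\le\bigl(s\max_i a_i^{-1}\bigr)^{1/2}\le s$, so $\delta=1-s>32v$, a contradiction.

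The bound $\rho\kappa\le69^2$ is the main obstacle, since $\kappa\ge1/\rho$ is trivial but the reverse needs control of the spread of the roots. I would use $1/\rho\ge\sum_i r_i^{-2}\big/\sum_i r_i^{-1}$, which is valid because all roots are positive. Here $\sum_i r_i^{-2}=\kappa^2-2e_2(1/r)$, and $2e_2(1/r)=H''(1)/H(1)=\E_\mu\bigl[u^2\sum_{i\ne j}(a_i+u)^{-1}(a_j+u)^{-1}\bigr]$. This yields
\[
\sum_i r_i^{-2}=\kappa^2-\E_\mu\Bigl[\bigl(\textstyle\sum_i\frac{u}{a_i+u}\bigr)^2\Bigr]+\E_\mu\Bigl[\textstyle\sum_i\frac{u^2}{(a_i+u)^2}\Bigr].
\]
The first two terms are minus a $\mu$-variance. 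I then need to show that, when $\delta\le32v$, this variance is at most a constant fraction of $\E_\mu\sum_i u^2/(a_i+u)^2\gtrsim\tau^2\sum_i(a_i+\tau)^{-2}$. Concentration of $\mu$ on a window of width about $\sqrt\tau$ around $\tau$ should give this, with $v$ entering through $\sum_i(a_i+\tau)^{-2}$. The conclusion is $\rho\lesssim\kappa/\kappa^2=1/\kappa$.

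For \eqref{eq:detgap} I would use the same inequality $\rho\le\sum_i r_i^{-1}\big/\sum_i r_i^{-2}$, now in the regime where $\tau$ is close to $1$ and $\mu$ is essentially the exponential law tilted by $f$. Expanding $\E_\mu$ in $u$, I expect $\kappa\approx s+O(s)$ and $\sum_i r_i^{-2}\approx$ a quantity of order $\max\{1/(a_1\delta^2),v^2/\delta^3\}\cdot\kappa$. The natural alternative is to show directly that $(-1)^n p_1(x;a)<0$ at $x=3\min\{a_1\delta^2,\delta^3/v^2\}$, using the integral representation of $p_1(x;a)$ obtained from \eqref{eq:coeff}. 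I would try the sign-change route first, since it only requires a one-point estimate of $p_1(x;a)$ rather than control of $\sum_i r_i^{-2}$.
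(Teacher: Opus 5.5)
\textbf{The bound $\rho\kappa\le69^2$ for $\delta\le32v$ does not go through as proposed.} You correctly flag this as the decisive step, but your identity for $\sum_i r_i^{-2}$ is off by a factor of two. By \eqref{eq:coeff}, $(-1)^np_1(x;a)/H(1)=\sum_j(-x)^jH^{(j)}(1)/\bigl((j!)^2H(1)\bigr)$. Hence $e_2(1/r)=H''(1)/(4H(1))$, and with $h(u)=\sum_iu/(a_i+u)$,
\[
\sum_ir_i^{-2}=\tfrac12\Bigl(\kappa^2-\Var_\mu h+\E_\mu\sum_iu^2/(a_i+u)^2\Bigr).
\]
As a check, for $n=2$ and $a=0$ the zeros are $2\pm\sqrt2$, so $\sum_ir_i^{-2}=3$, whereas your formula gives $2$.

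Having lost the $\tfrac12\kappa^2$, you need $\Var_\mu h\le c\,\E_\mu\sum_iu^2/(a_i+u)^2$. This is false in the relevant regime. Take $a_i=n$ for all $i$, so $\delta=0$ and $v=n^{-1/2}$. The weight $e^{-u}\prod_i(1+u/n)\approx e^{-u^2/(2n)}$ is spread over a window of width $\asymp\sqrt n\asymp\tau$, not $\sqrt\tau$. Then $\Var_\mu h\asymp n$, while $\E_\mu\sum_iu^2/(a_i+u)^2\asymp1$.

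A lower bound through $\E_\mu\sum_iu^2/(a_i+u)^2\le n$ also cannot give $\sum_ir_i^{-2}\gtrsim\kappa^2$ in general: when all $a_i$ are tiny, $\kappa\approx n$.

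The missing idea is this. With $m=\E_\mu u=1+\kappa$, the correct identity reads $\sum_ir_i^{-2}=(m^2-\Var_\mu u)/2$. One must show $\Var_\mu u\le(1-c)m^2$, i.e.\ the tilted law is quantitatively less dispersed than an exponential; this is exactly where $\delta\le32v$ enters. The paper compares the density with $(1+vu)e^{-\theta u}$ of equal mean. The ratio is log-concave because $a_i^{-1}\le v$, so the difference changes sign twice and the variance is smaller. This yields $\rho\le\kappa m^{-2}(3+1/(vm))^2$. It is combined with $m\ge1/(2(\delta+v))$, proved by comparison with the weight $e^{-\delta u}\prod_ie^{-u/a_i}(1+u/a_i)$.

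\textbf{For \eqref{eq:detgap} you give no actual argument.} The sign-change route is legitimate in principle, but you supply no estimate of $p_1$ at the proposed point. The paper instead uses \eqref{eq:ode} and $\sum_jr_j(t)^{-1}=H'(t)/H(t)$ to get $(r_1H^2)'\le3H^2$. Integrating gives $\rho\le\bigl(a_1H(0)^2+3\int_0^1H(t)^2\,dt\bigr)/H(1)^2$. It then bounds $H(t)/H(0)\le(1-ts)^{-1}$ and $H(1)/H(0)\ge3/(4\delta)$. The $\delta^3/v^2$ half follows from the variance comparison above with $m\ge3/(4\delta)$.

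\textbf{Two smaller repairs are also needed.}

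For $\tau/8\le1+\kappa$, monotonicity of $ue^{-u}f(u)$ does not control the mass near $0$ after dividing by $u$. For instance, the increasing function $u/(u+\varepsilon)$ gives $\int_0^{\tau/4}(u+\varepsilon)^{-1}du\to\infty$ as $\varepsilon\to0$. Use instead that $e^{-u}f(u)$ is itself log-concave with logarithmic derivative $\ge-1/\tau$ on $[0,\tau]$. It therefore loses at most a factor $e^{-1/2}$ under a shift by $\tau/2$ inside $[0,\tau]$.

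For $\kappa\asymp\tau$, a bound $\tau\ge1+c$ combined with $\tau/8\le1+\kappa$ says nothing when $\tau\le8$. You need $\kappa\gtrsim1$ directly. For example, $\kappa=\E_\mu h\ge e^{-1}h(1)\ge s/(e(1+s))$, because $\mu$ has increasing density relative to $e^{-u}$ and so stochastically dominates $\mathrm{Exp}(1)$. Moreover $s\ge1/33$ in this regime.
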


\begin{proof}
Let $U$ have density
\[
 f(u)=\frac{e^{-u}\prod_{i=1}^n(a_i+u)}{H(1)},\qquad u>0,
\]
and write $\mu=\E U$ and $h(u)=\sum_i u/(a_i+u)$.
Differentiating the integral representation of $H$ and integrating
$(uf(u))'=(1-u+h(u))f(u)$ give
$\kappa=\E h(U)$ and $\mu=1+\kappa$.
Since $h$ is concave, Jensen's inequality yields
$\mu\le1+h(\mu)$, or
$\mu^{-1}+\sum_i(a_i+\mu)^{-1}\ge1$.
By \eqref{eq:tau}, this implies $\mu\le \tau$.

For the reverse bound, $f$ is log-concave and
$(\log f)'(\tau)=-1/\tau$. Thus
$f(u+\tau/2)\ge e^{-1/2}f(u)$ for $0<u<\tau/2$, whence
\[
 \Prob(U<\tau/2)\le\sqrt e\,\Prob(U\ge \tau/2),\qquad
 \mu\ge\frac{\tau}{2(1+\sqrt e)}\ge\frac{\tau}{8}.
\]
This proves \eqref{eq:kappa-tau}, without any invertibility assumption.

We now assume that $a_i>0$ for all $i$. The coefficient
formula \eqref{eq:coeff} gives
\[
 \frac{(-1)^np_1(x;a)}{H(1)}
 =\sum_{j=0}^n\frac{(-x)^jH^{(j)}(1)}{(j!)^2H(1)}
 =\prod_{i=1}^n\left(1-\frac{x}{r_i}\right).
\]
Since $\E U^2=2+4\kappa+H''(1)/H(1)$, comparison of the first
two coefficients yields
\begin{equation}\label{eq:moments}
 \sum_{i=1}^n r_i^{-2}
 =\frac{\mu^2-\Var(U)}2\le\frac \kappa\rho.
\end{equation}

We compare this variance with that of
$f_\theta(u)=\frac{\theta^2}{\theta+v}(1+vu)e^{-\theta u}$,
where $\theta>0$ is chosen so that
$\mu=(\theta+2v)/(\theta(\theta+v))$.
As $a_i^{-1}\le v$,
\[
 (\log(f/f_\theta))''(u)
 =-\sum_i(a_i+u)^{-2}+\frac{v^2}{(1+vu)^2}\le0.
\]
The two densities have the same mass and mean. By log-concavity
of their ratio, unless they coincide their difference has two
crossings $\alpha<\beta$, with signs $-,+,-$. Integrating
$(u-\alpha)(u-\beta)(f-f_\theta)\le0$ therefore shows that
$\Var(U)\le\Var_{f_\theta}(u)$.
A direct computation gives
$(\mu^2-\Var_{f_\theta}(u))/2=\mu^2/(2+\theta/v)^2$;
moreover, the mean equation implies $\theta\le v+\mu^{-1}$.
Together with \eqref{eq:moments}, these facts yield
\begin{equation}\label{eq:rho-mu}
 \rho\le\frac \kappa{\mu^2}\left(3+\frac1{v\mu}\right)^2.
\end{equation}

Set
$w(u)=e^{-\delta u}\prod_i e^{-u/a_i}(1+u/a_i)$ and
$Z=\int_0^\infty w(u)\,du$.
The elementary bounds
\begin{equation}\label{eq:log-bounds}
 z-\frac{z^2}{2}\le\log(1+z)\le z
\end{equation}
imply
$e^{-\delta u-v^2u^2/2}\le w(u)\le e^{-\delta u}\le1$.
Jensen's inequality under an exponential law of rate $\delta+v$
gives $Z\ge(\delta+v)^{-1}$.
Since $f/(w/Z)$ is nondecreasing and $w/Z\le1/Z$,
the mean of $f$ is at least that of $w/Z$, which is at least
$Z/2$. Consequently,
\begin{equation}\label{eq:mu-lower}
 \mu\ge\frac1{2(\delta+v)}.
\end{equation}
If $\delta\le32v$, \eqref{eq:rho-mu}, \eqref{eq:mu-lower}
and $\kappa<\mu$ give $\rho \kappa\le(5+2\delta/v)^2\le69^2$.
Also $v\le s$ implies $s\ge1/33$. Since $f(u)/e^{-u}$ is increasing,
$\mathbb P(U\ge1)\ge e^{-1}$;
therefore
\[
 \kappa=\E h(U)\ge e^{-1}h(1)
 \ge\frac{s}{e(1+s)}\ge\frac1{34e}.
\]
The comparison $\kappa\asymp \tau$ follows from \eqref{eq:kappa-tau},
proving \eqref{eq:small-gap}.

Now suppose $\delta>32v$, so $s<1$ and $\delta=1-s$.
For $t>0$, the identity
$\sum_j r_j(t;a)^{-1}=H'(t)/H(t)$ and \eqref{eq:ode}
give $r_1'(t)+2(H'(t)/H(t))r_1(t)\le3$.
Integrating this inequality yields
\begin{equation}\label{eq:integrate}
 \rho\le\frac{a_1H(0)^2+3\int_0^1H(t)^2\,dt}{H(1)^2},
\end{equation}
first for distinct $a_i$ and then by continuity.
The bounds in \eqref{eq:log-bounds} also imply
\[
 \frac{H(t)}{H(0)}\le\frac1{1-ts},\qquad
 \frac{H(1)}{H(0)}\ge
 \frac1\delta\left(1-\frac{v^2}{\delta^2}\right)
 \ge\frac3{4\delta}.
\]
Using $\int_0^1(1-ts)^{-2}\,dt=1/\delta$ and
$a_1\delta>32$ in \eqref{eq:integrate}, we obtain
$\rho\le2a_1\delta^2$.
Finally, $f=w/Z$ and $Z\le1/\delta$ in this regime, so
\[
 \mu\ge\frac1\delta\left(1-\frac{3v^2}{\delta^2}\right)
 \ge\frac3{4\delta}.
\]
Substituting this and $\kappa<\mu$ into \eqref{eq:rho-mu}
gives $\rho\le3\delta^3/v^2$, proving \eqref{eq:detgap}.
\end{proof}

\begin{proposition}
\label{prop:gordon}
Let $0<\lambda<a_1$, define
$R=(D^2-\lambda I)^{-1}$ and $\eta=1-\tr R$.
If $\eta\ge8(\tr R^2)^{1/2}$, then
\begin{equation}\label{eq:gordon}
 \E\smin(D+G)\ge\frac38\sqrt\lambda\,\eta.
\end{equation}
\end{proposition}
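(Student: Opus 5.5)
Gordon's min--max comparison gives
\[
\E\smin(D+G)\ge\E\Phi(g,h),\qquad \Phi(g,h)=\min_{\norm{x}=1}\bigl(\norm{Dx+g}+\ip{h}{x}\bigr),
\]
for independent standard Gaussian vectors $g,h\in\R^n$. We then symmetrize using $(g,h)\overset{d}{=}(h,-g)$, exactly as in the Introduction, so that $\E\Phi=\tfrac12\E[\Phi(g,h)+\Phi(h,-g)]$.

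The sum $\Psi=\Phi(g,h)+\Phi(h,-g)$ is nonnegative. To see this, note that $\norm{Dx+g}\ge\ip{Dx+g}{y}$ and $\norm{Dx+h}\ge\ip{Dx+h}{y'}$ for unit vectors $y,y'$. Taking $y=-x$ in the second term and $y=x$ in the first gives $\norm{Dx+g}+\norm{Dy+h}\ge\ip{g}{x}-\ip{h}{y}+\ip{Dx}{x}-\ip{Dy}{y}$, with a symmetric choice. The cleanest route, though, is to prove nonnegativity via the deterministic lower bound below together with a trivial bound off the good event. If that turns out awkward, I will instead use the crude bound $\Phi\ge -\norm{h}$ and control the bad event probabilistically.

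\emph{Deterministic bound.} For fixed $g$ with $g^\top Rg<1$, I bound $\min_x(\norm{Dx+g}^2-\lambda)$. Write
\[
\norm{Dx+g}^2=x^\top(D^2-\lambda)x+2\ip{Dx}{g}+\norm g^2+\lambda .
\]
Minimizing the quadratic form over all $x$ gives $\norm{Dx+g}^2\ge\lambda+\norm g^2-g^\top D^2Rg$. Since $\norm g^2-g^\top D^2Rg=-\lambda\, g^\top Rg$, this equals $\lambda(1-g^\top Rg)$. Combining this lower bound on $\norm{Dx+g}$ with the linear term $\ip{h}{x}$, and pairing it with the swapped term $\norm{Dx'+h}-\ip{g}{x'}$, I aim for the estimate announced in the Introduction:
\[
\Psi\ge2\sqrt{\lambda(1-g^\top Rg)(1-h^\top Rh)}
\]
on the event $E=\{g^\top Rg<1,\ h^\top Rh<1\}$. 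This pairing step is the main obstacle. It requires choosing the minimizers jointly and applying a Cauchy--Schwarz/AM--GM argument (for example $\alpha+\beta\ge2\sqrt{\alpha\beta}$ applied after completing squares in both terms) to absorb the cross terms $\ip{h}{x}$ and $-\ip{g}{x'}$. I would try parametrizing $\norm{Dx+g}\ge\sqrt{\lambda\norm x^2+\ldots}$ along the S-lemma/Lagrangian dual with multiplier $\lambda$.

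\emph{Probabilistic step.} Granting $\Psi\ge0$ and the bound on $E$, take the event
\[
E'=\{g^\top Rg\le1-\eta/2,\ h^\top Rh\le1-\eta/2\}\subseteq E .
\]
On $E'$ we have $\Psi\ge2\sqrt\lambda\cdot\eta/2=\sqrt\lambda\,\eta$. Now $g^\top Rg$ has mean $\tr R=1-\eta$ and variance $2\tr R^2\le\eta^2/32$, using the hypothesis $\eta\ge8(\tr R^2)^{1/2}$. By Chebyshev,
\[
\Prob\bigl(g^\top Rg>1-\eta/2\bigr)\le\frac{2\tr R^2}{(\eta/2)^2}\le\frac18 ,
\]
and the same holds for $h$. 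Hence $\Prob(E')\ge3/4$, and therefore
\[
\E\smin(D+G)\ge\tfrac12\E[\Psi\mathbf 1_{E'}]\ge\tfrac12\cdot\tfrac34\sqrt\lambda\,\eta=\tfrac38\sqrt\lambda\,\eta,
\]
which is \eqref{eq:gordon}.
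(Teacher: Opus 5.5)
Your skeleton (Gordon comparison, symmetrization via $(g,h)\overset{d}{=}(h,-g)$, Chebyshev plus a union bound giving $\Prob(E')\ge 3/4$) is the paper's, and your probabilistic step is correct. However, the two deterministic inequalities that carry the argument are not proved.

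\emph{Nonnegativity of $\Psi$.} Your test-vector choice is garbled. It produces $\ip{Dx}{x}-\ip{Dy}{y}$ together with linear terms that do not cancel. The fallback $\Phi\ge-\norm h$ also fails. It leaves an error $\E[(\norm g+\norm h)\mathbf 1_{E'^c}]$ of size roughly $\sqrt n\,\Prob(E'^c)$, and Chebyshev only bounds $\Prob(E'^c)$ by a constant. For example, take $a_1=40$, $\lambda=20$, and all other $a_i$ huge. Then the hypothesis holds, $\sqrt\lambda\,\eta\approx4$, and $\Prob(E'^c)$ is a fixed positive number, so the error grows like $\sqrt n$. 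You really need $\Psi\ge0$ pointwise so that the bad event can be discarded. The fix is to cross the test vectors. For unit $x,y$, use $\norm{Dx+g}\ge\ip{y}{Dx+g}$ and $\norm{Dy+h}\ge-\ip{x}{Dy+h}$. After adding $\ip{h}{x}-\ip{g}{y}$, everything cancels because $D=D^{\top}$, and minimizing over $x,y$ gives $\Psi\ge0$.

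\emph{The bound on the good event.} You explicitly leave this as the ``main obstacle''. Your completed square $\norm{Dx+g}^2=\norm{R^{-1/2}(x+RDg)}^2+\lambda\alpha$, with $\alpha=1-g^{\top}Rg$, is right. But minimizing $\norm{Dx+g}$ alone discards its interaction with $\ip{h}{x}$. The missing idea is to write the linear term in the same shifted variable $u=R^{-1/2}(x+RDg)$:
\[
\ip{h}{x}=\ip{R^{1/2}h}{u}-h^{\top}DRg,
\qquad \norm{R^{1/2}h}=\sqrt{1-\beta},
\]
where $\beta=1-h^{\top}Rh$. Cauchy--Schwarz with the vectors $(\sqrt{1-\beta},\sqrt\beta)$ and $(\norm u,\sqrt{\lambda\alpha})$ gives $\sqrt{\norm u^2+\lambda\alpha}-\sqrt{1-\beta}\,\norm u\ge\sqrt{\lambda\alpha\beta}$. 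Hence $\Phi(g,h)\ge\sqrt{\lambda\alpha\beta}-h^{\top}DRg$, for each term separately. The same bound for $(h,-g)$ has cross term $+g^{\top}DRh=+h^{\top}DRg$, since $DR$ is diagonal. So the cross terms cancel in $\Psi$, and no joint choice of minimizers is needed. With these two inequalities (and the Gordon step justified, e.g.\ by comparing $\ip{y}{Gx}+\gamma$ with $\ip{g}{y}+\ip{h}{x}$), your probabilistic step finishes the proof exactly as in the paper.
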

\begin{proof}
Let $g,h\in\R^n$ be independent standard Gaussian
vectors, and define
\[
\Phi(g,h)=\min_{\norm{x}=1}
\left(\norm{Dx+g}+\ip{h}{x}\right).
\]
We first show that
\begin{equation}\label{eq:comparison}
\E\smin(D+G)\ge\E\Phi(g,h).
\end{equation}
Let $\gamma$ be an independent standard Gaussian
scalar. For unit vectors $x,y$, consider
\[
Y_{x,y}=\ip{y}{Gx}+\gamma,
\qquad
X_{x,y}=\ip{g}{y}+\ip{h}{x}.
\]
Both processes are centered with variance $2$, and
\[
\begin{aligned}
&\E Y_{x,y}Y_{x',y'}
 -\E X_{x,y}X_{x',y'}\\
&\qquad=
(1-\ip{x}{x'})(1-\ip{y}{y'})\ge0,
\end{aligned}
\]
with equality whenever $x=x'$.
Gordon's Gaussian min--max comparison
\cite[Theorem~A.1]{TOH}, with the common
deterministic term $\ip{y}{Dx}$, therefore gives
\[
\begin{aligned}
&\E\min_{\norm{x}=1}\max_{\norm{y}=1}
\left(\ip{y}{Dx}+Y_{x,y}\right)\\
&\qquad\ge
\E\min_{\norm{x}=1}\max_{\norm{y}=1}
\left(\ip{y}{Dx}+X_{x,y}\right).
\end{aligned}
\]
Since $\E\gamma=0$, this proves
\eqref{eq:comparison}.

Next, since $(g,h)\stackrel{d}{=}(h,-g)$,
\[
\E\Phi(g,h)
=\frac12\E\bigl[\Phi(g,h)+\Phi(h,-g)\bigr].
\]
For any unit vectors $x,y$,
\[
\begin{aligned}
&\norm{Dx+g}+\ip{h}{x}
 +\norm{Dy+h}-\ip{g}{y}\\
&\quad\ge
\ip{y}{Dx+g}+\ip{h}{x}
-\ip{x}{Dy+h}-\ip{g}{y}=0,
\end{aligned}
\]
where we used $D=D^{\top}$. Taking the minimum
over $x$ and $y$ gives
\begin{equation}\label{eq:sym-nonnegative}
\Phi(g,h)+\Phi(h,-g)\ge0.
\end{equation}

Set
\[
\alpha=1-g^{\top}Rg,\qquad
\beta=1-h^{\top}Rh,
\qquad
\mathcal E=\{\alpha,\beta\ge\eta/2\}.
\]
For $\norm{x}=1$, the identity
$R^{-1}=D^2-\lambda I$ gives
\[
\begin{aligned}
\norm{Dx+g}^2
&=\norm{R^{-1/2}(x+RDg)}^2
  +\lambda\alpha,\\
\ip{h}{x}
&=\ip{R^{1/2}h}{R^{-1/2}(x+RDg)}
  -h^{\top}DRg.
\end{aligned}
\]
On $\mathcal E$, we have $\alpha>0$ and
$\norm{R^{1/2}h}^2=1-\beta<1$.
By Cauchy--Schwarz in $\R^{n+1}$,
\[
\norm{Dx+g}+\ip{h}{x}
\ge\sqrt{\lambda\alpha\beta}-h^{\top}DRg.
\]
Taking the minimum over $\norm{x}=1$ yields
\[
\Phi(g,h)
\ge\sqrt{\lambda\alpha\beta}-h^{\top}DRg.
\]
Applying the same inequality to $(h,-g)$ gives
\[
\Phi(h,-g)
\ge\sqrt{\lambda\alpha\beta}+h^{\top}DRg.
\]
Consequently, on $\mathcal E$,
\[
\Phi(g,h)+\Phi(h,-g)
\ge2\sqrt{\lambda\alpha\beta}
\ge\sqrt\lambda\,\eta.
\]

For a standard Gaussian vector $z$,
\[
\E z^{\top}Rz=\tr R,\qquad
\Var(z^{\top}Rz)=2\tr R^2.
\]
Chebyshev's inequality and a union bound give
\[
\Prob(\mathcal E^c)
\le\frac{16\tr R^2}{\eta^2}
\le\frac14.
\]
Together with \eqref{eq:comparison} and
\eqref{eq:sym-nonnegative}, this yields
\[
\begin{aligned}
\E\smin(D+G)
&\ge\frac12\E\bigl[\Phi(g,h)+\Phi(h,-g)\bigr]\\
&\ge\frac12\sqrt\lambda\,\eta\,
     \Prob(\mathcal E)\\
&\ge\frac38\sqrt\lambda\,\eta.
\end{aligned}
\]
\end{proof}

\begin{proof}[Proof of Theorem~\ref{thm:main}]
By orthogonal invariance, it suffices to consider $D$.
The upper bound in \eqref{eq:root} follows from \eqref{eq:upper}.
Assume first that $D$ is invertible.

If $\delta\le32v$, Proposition~\ref{prop:negative} and
Lemma~\ref{lem:scale} give
\[
 \E\smin(D+G)\ge\frac1{64\sqrt \kappa}
 \ge\frac{\sqrt\rho}{64\cdot69}.
\]
Moreover, $\kappa\asymp \tau$ by \eqref{eq:small-gap}; hence
\eqref{eq:upper}, \eqref{eq:small-gap}, and the negative-moment
bound imply $\E\smin(D+G)\asymp \tau^{-1/2}$.
By \eqref{eq:tau}, $\delta\le \tau^{-1}$, so
$\delta\min\{d_1,\sqrt\delta/v\}
 \le\delta^{3/2}/v\le32\tau^{-1/2}$.
This proves \eqref{eq:explicit} in this case.

Suppose now that $\delta>32v$, and choose
\[
 \lambda=\min\left\{\frac{a_1}2,\frac{\delta}{4v^2}\right\},
 \qquad R=(D^2-\lambda I)^{-1},\qquad \eta=1-\tr R.
\]
As $a_i-\lambda\ge a_i/2$, we have
$(\tr R^2)^{1/2}\le2v$ and
$\eta\ge\delta-2\lambda v^2\ge\delta/2>16v$.
Proposition~\ref{prop:gordon} and \eqref{eq:detgap} therefore give
\[
 \E\smin(D+G)
 \ge\frac3{32}\delta\min\left\{d_1,\frac{\sqrt\delta}{v}\right\}
 \ge\frac3{32\sqrt3}\sqrt\rho.
\]
In addition, \eqref{eq:tau} yields
$\delta\le \tau^{-1}\le\delta+v^2/\delta\le2\delta$.
Since $a_1\ge1/v$ and $\delta>32v$,
\[
 \delta^2\min\left\{a_1,\frac\delta{v^2}\right\}
 \ge32\delta\ge16\tau^{-1}.
\]
Thus the second term in \eqref{eq:explicit} dominates
$\tau^{-1/2}$. The matching upper bound follows from
\eqref{eq:upper} and \eqref{eq:detgap}, completing the
proof for invertible $D$.

If $D$ is singular, apply the estimates to
$D+\varepsilon I$ and let $\varepsilon\downarrow0$.
The expectation changes by at most $\varepsilon$,
while $\rho$ and $\tau$ depend continuously on $a$.
Since the smallest diagonal entry tends to zero,
$\delta=0$ for all sufficiently small $\varepsilon$;
thus the second term in \eqref{eq:explicit} vanishes.
\end{proof}

\section*{Acknowledgments}
The authors thank Professor Hanchao Wang for his guidance and
helpful suggestions.

\par\medskip
\noindent\textbf{Statement on A.I. use.}
The authors used OpenAI's GPT-5.6 to improve
the language and readability of the manuscript.
The authors take full responsibility for its content.


\begin{thebibliography}{9}

\bibitem{GM}
P.~Graczyk and J.~Ma{\l}ecki, Strong solutions of non-colliding
particle systems, \emph{Electron. J. Probab.} \textbf{19} (2014),
paper no.~119, 21~pp.
\href{https://doi.org/10.1214/EJP.v19-3842}{doi:10.1214/EJP.v19-3842}.
\MR{3296535}

\bibitem{Marcus}
A.~W.~Marcus, Finite free point processes, preprint, 2022.
\href{https://arxiv.org/abs/2205.00495}{arXiv:2205.00495}.

\bibitem{MSS}
A.~W.~Marcus, D.~A.~Spielman and N.~Srivastava, Interlacing
families II: Mixed characteristic polynomials and the Kadison--Singer
problem, \emph{Ann. of Math.} (2) \textbf{182} (2015), no.~1,
327--350.
\href{https://doi.org/10.4007/annals.2015.182.1.8}
{doi:10.4007/annals.2015.182.1.8}.
\MR{3374963}

\bibitem{MSSconv}
A.~W.~Marcus, D.~A.~Spielman and N.~Srivastava, Finite free
convolutions of polynomials, \emph{Probab. Theory Related Fields}
\textbf{182} (2022), nos.~3--4, 807--848.
\href{https://doi.org/10.1007/s00440-021-01105-w}
{doi:10.1007/s00440-021-01105-w}.
\MR{4408504}

\bibitem{SST}
A.~Sankar, D.~A.~Spielman and S.-H.~Teng, Smoothed analysis of
the condition numbers and growth factors of matrices,
\emph{SIAM J. Matrix Anal. Appl.} \textbf{28} (2006), no.~2,
446--476.
\href{https://doi.org/10.1137/S0895479803436202}
{doi:10.1137/S0895479803436202}.

\bibitem{Tikhomirov}
K.~Tikhomirov, Quantitative invertibility of non-Hermitian random
matrices, in \emph{ICM---International Congress of Mathematicians.
Vol.~4. Sections 5--8}, EMS Press, Berlin, 2023, 3292--3313.
\href{https://doi.org/10.4171/ICM2022/10}{doi:10.4171/ICM2022/10}.

\bibitem{TOH}
C.~Thrampoulidis, S.~Oymak and B.~Hassibi, The Gaussian min-max
theorem in the presence of convexity, preprint, 2015.
\href{https://arxiv.org/abs/1408.4837v2}{arXiv:1408.4837v2}.
\bibitem{Hirsch}
M.~W.~Hirsch, Systems of differential equations
that are competitive or cooperative. II.
Convergence almost everywhere,
\emph{SIAM J. Math. Anal.} \textbf{16}
(1985), no.~3, 423--439.
\bibitem{Fan}
K.~Fan, On a theorem of Weyl concerning eigenvalues
of linear transformations. I,
\emph{Proc. Natl. Acad. Sci. USA}
\textbf{35} (1949), no.~11, 652--655.
\href{https://doi.org/10.1073/pnas.35.11.652}
{doi:10.1073/pnas.35.11.652}.
\bibitem{MOA}
A.~W. Marshall, I.~Olkin, and B.~C. Arnold,
\emph{Inequalities: Theory of Majorization and Its Applications},
2nd ed., Springer, New York, 2011.
\bibitem{KonigOConnell}
W.~K\"onig and N.~O'Connell,
Eigenvalues of the Laguerre process as
non-colliding squared Bessel processes,
\emph{Electron. Commun. Probab.}
\textbf{6} (2001), 107--114.
\bibitem{ShahKhatri}
B.~K.~Shah and C.~G.~Khatri,
Proof of conjectures about the expected values
of the elementary symmetric functions of a noncentral Wishart matrix,
\emph{Ann. Statist.} \textbf{2} (1974), no.~4, 833--836.
\href{https://doi.org/10.1214/aos/1176342773}
{doi:10.1214/aos/1176342773}.
\bibitem{Dette}
H.~Dette,
Strong approximation of eigenvalues of large
dimensional Wishart matrices by roots of
generalized Laguerre polynomials,
\emph{J. Approx. Theory}
\textbf{118} (2002), no.~2, 290--304.
\href{https://doi.org/10.1006/jath.2002.3725}
{doi:10.1006/jath.2002.3725}.

\bibitem{Hardy}
A.~Hardy,
Average characteristic polynomials of
determinantal point processes,
\emph{Ann. Inst. Henri Poincar\'e Probab. Stat.}
\textbf{51} (2015), no.~1, 283--303.
\href{https://doi.org/10.1214/13-AIHP572}
{doi:10.1214/13-AIHP572}.

\bibitem{Forrester}
P.~J.~Forrester,
The averaged characteristic polynomial for
the Gaussian and chiral Gaussian ensembles
with a source,
\emph{J. Phys. A: Math. Theor.}
\textbf{46} (2013), no.~34, 345204.
\href{https://doi.org/10.1088/1751-8113/46/34/345204}
{doi:10.1088/1751-8113/46/34/345204}.
\bibitem{GeissManthey}
C.~Gei{\ss} and R.~Manthey,
Comparison theorems for stochastic differential equations
in finite and infinite dimensions,
\emph{Stochastic Process. Appl.}
\textbf{53} (1994), no.~1, 23--35.
\href{https://doi.org/10.1016/0304-4149(94)90055-8}
{doi:10.1016/0304-4149(94)90055-8}.
\bibitem{Demmel}
J.~W. Demmel, The probability that a numerical analysis problem
is difficult, \emph{Math. Comp.} \textbf{50} (1988), no.~182,
449--480.
\href{https://doi.org/10.1090/S0025-5718-1988-0929546-7}
{doi:10.1090/S0025-5718-1988-0929546-7}.
\bibitem{SpielmanTeng}
D.~A. Spielman and S.-H. Teng, Smoothed analysis of algorithms:
Why the simplex algorithm usually takes polynomial time,
\emph{J. ACM} \textbf{51} (2004), no.~3, 385--463.
\href{https://doi.org/10.1145/990308.990310}
{doi:10.1145/990308.990310}.
\end{thebibliography}
\end{document}